\theoremstyle{plain}
\newtheorem{thm}{Theorem}
\newtheorem*{thm*}{Theorem}
\newtheorem*{lem*}{Lemma}
\newtheorem{lem}[thm]{Lemma}
\newtheorem{cor}[thm]{Corollary}
\newtheorem*{cor*}{Corollary}
\newtheorem{prop}[thm]{Proposition}
\newtheorem*{conj*}{Conjecture}
\theoremstyle{definition}
\newtheorem{exl}[thm]{Example}
\newtheorem*{exl*}{Example}
\newtheorem{defn}[thm]{Definition}
\newtheorem*{note*}{Note}
\newcommand{\R}{\mathbb{R}}
\newcommand{\Z}{\mathbb{Z}}
\newcommand{\N}{\mathbb{N}}
\renewcommand{\phi}{\varphi}
\newcommand{\RT}{\text{\textit{RT}}}
\newcommand{\Reid}{\mathcal{R}}
\DeclareMathOperator{\Fix}{Fix}
\DeclareMathOperator{\ind}{ind}
\DeclareMathOperator{\id}{id}
\DeclareMathOperator{\tr}{tr}
\newcommand{\pd}{\partial}
\newcommand{\ep}{\epsilon}
\newcommand{\adm}{\mathcal{C}}
\begin{document}
\title{The Reidemeister Trace of an $n$-valued map}
\author{P. Christopher Staecker}

\maketitle

\begin{abstract}
In topological fixed point theory, the Reidemeister trace is an invariant associated to a selfmap of a polyhedron which combines information from the Lefschetz and Nielsen numbers. In this paper we define the Reidemeister trace in the context of $n$-valued selfmaps of compact polyhedra. We prove several properties of the Reidemeister trace which generalize properties from the single-valued theory, and prove an averaging formula.
\end{abstract}

\section{Introduction}
Given sets $X$ and $Y$ and a positive integer $n$, an $n$-valued function from $X$ to $Y$ is a set-valued function $f$ on $X$ such that $f(x)\subseteq Y$ has cardinality exactly $n$ for every $x\in X$. Equivalently, an $n$-valued function on $X$ is a single-valued function $f:X \to D_n(Y)$, where $D_n(Y)$ is the \emph{unordered configuration space of $n$ points in $Y$}, defined as:
\[ D_n(Y) = \{ \{y_1,\dots,y_n\} \mid y_i \in Y, y_i \neq y_j \text{ for } i\neq j \}. \]

When $Y$ is a topological space, we give $D_n(Y)$ a topology as follows: begin with the product topology on $Y^n$, then consider the subspace $F_n(Y)$ of tuples $(y_1,\dots,y_n)\in Y^n$ with $y_i \neq y_j$ for $i\neq j$. This is the \emph{ordered configuration space}. Then $D_n(Y)$ is the quotient of $F_n(Y)$ up to ordering, and so its topology is given by the quotient topology. When $f:X\to D_n(Y)$ is continuous, we call it an \emph{$n$-valued map from $X$ to $Y$}. Continuity of $n$-valued maps can also be defined in terms of lower- and upper-semicontinuity. These approaches are equivalent: see \cite{bg18}.

In this paper all spaces will be assumed to be finite connected polyhedra, and we will generally discuss $n$-valued ``selfmaps'' of the form $f:X \to D_n(X)$. A \emph{fixed point} of such a map is a point $x\in X$ with $x \in f(x)$, and we denote the set of fixed points of $f$ by $\Fix(f)\subset X$. Topological fixed point theory of $n$-valued maps was first studied by Schirmer in \cite{schi84a, schi84b}. 

Nielsen theory (see \cite{jian83}) of a single-valued map partitions the fixed point set into fixed point classes, which are indexed by lifting classes, also called Reidemeister classes. The fixed point index assigns an integer value to each fixed point class, and summing the fixed point index over the entire domain will give the Lefschetz number, which can also be computed as an alternating sum of traces in homology. 

The Reidemeister trace is a single invariant which combines the information from the Lefshetz and Nielsen numbers. It was defined by Wecken \cite{weck41} using a trace formula which incorporates information about the algebraic lifting classes. A more modern presentation of the Reidemeister trace was given in \cite{huss82}. A more readable presentation appears in \cite{geog02}. Our goal is to generalize the definition and basic results for the Reidemeister trace to the setting of $n$-valued maps.

We will briefly review the theory of Reidemeister classes for $n$-valued maps, which was defined in \cite{bdds20}. 

For a finite connected polyhedron $X$, we always denote the fundamental group of $X$ by $\pi = \pi_1(X)$. Given the universal covering $p:\tilde X \to X$, the \emph{orbit configuration space} with respect to this cover is:
\[ F_n(\tilde X, \pi) = \{ (\tilde x_1,\dots,\tilde x_n) \in \tilde X^n \mid p(\tilde x_i) \neq p(\tilde x_j) \text{ for $i \neq j$} \}, \]
and the map $p^n:F_n(\tilde X,\pi) \to D_n(X)$ given by applying $p$ to each coordinate is a covering map.

Given a map $f:X\to D_n(X)$, we choose a lifting $\tilde f: \tilde X \to F_n(\tilde X,\pi)$ so that the diagram commutes:
\[ 
\begin{tikzcd}
\tilde X \arrow[r,"\tilde f"] \arrow[d,"p"] & F_n(\tilde X,\pi) \arrow[d,"p^n"] \\
X \arrow[r,"f"] & D_n(X)
\end{tikzcd}
\]
Since $F_n(\tilde X,\pi) \subset \tilde X^n$ we can write $\tilde f = (\tilde f_1,\dots, \tilde f_n)$ for single-valued maps $\tilde f_k: \tilde X \to \tilde X$. 

The covering group of $F_n(\tilde X,\pi)$ is the semidirect product $\pi^n \rtimes \Sigma_n$, where $\pi^n$ is the $n$-fold cartesian product of $\pi$, and $\Sigma_n$ is the symmetric group on $n$ elements. The action of $\pi^n \rtimes \Sigma_n$ on $F_n(\tilde X, \pi)$ is given by:
\[ (\alpha_1,\dots,\alpha_n; \sigma)\cdot (\tilde x_1,\dots, \tilde x_n) = (\alpha_1 \tilde x_{\sigma^{-1}(1)}, \dots, \alpha_n \tilde x_{\sigma^{-1}(n)}). \]

The group operation and inverse for the semidirect 
product take the form:
\begin{align*} 
(\alpha_1,\dots,\alpha_n; \sigma)(\beta_1,\dots,
\beta_n;\rho) &= (\alpha_1\beta_{\sigma^{-1}(1)}, \dots, 
\alpha_n\beta_{\sigma^{-1}(n)}; \sigma \circ \rho) \\
(\alpha_1,\dots,\alpha_n;\sigma)^{-1} &= 
(\alpha^{-1}_{\sigma(1)},\dots, 
\alpha^{-1}_{\sigma(n)}; \sigma^{-1}).
\end{align*}

Any pair $(\alpha,k)\in \pi \times \{1,\dots,n\}$ determines the fixed point class $p\Fix(\alpha^{-1}\tilde f_k)$. 
The map $f$ and its lifting $\tilde f$ induce a homomorphism $\tilde f_\#: \pi \to \pi^n \rtimes \Sigma_n$, defined by the formula
\begin{equation}\label{inducedeq} 
\tilde f \circ \gamma = \tilde f_\# (\gamma) \circ \tilde f. 
\end{equation}
We can write $\tilde f_\#$ in coordinates as $\tilde f_\#= (\phi_1,\dots,\phi_n;\sigma)$, where $\phi_k:\pi \to \pi$ are functions and $\sigma: \pi \to \Sigma_n$ is a homomorphism. Although $\tilde f_\#$ and $\sigma$ are homomorphisms, the $\phi_k$ may not be homomorphisms. Specifically, they obey the following formulas:

\begin{prop}
As above, let $\tilde f_\# = (\phi_1,\dots,\phi_n;\sigma)$, and let $x,y\in \pi$. Then:
\begin{align*} 
\phi_k(xy) &= \phi_{k}(x)\phi_{\sigma^{-1}_x(k)}(y) \\
\phi_k(x^{-1}) &= \phi_{\sigma_x(k)}(x)^{-1}
\end{align*}
\end{prop}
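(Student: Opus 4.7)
The plan is to derive both formulas directly from the fact that $\tilde f_\#$ is a group homomorphism, together with the explicit formulas given earlier for multiplication and inversion in the semidirect product $\pi^n \rtimes \Sigma_n$. Nothing about the underlying topology or about the lifting equation \eqref{inducedeq} needs to be revisited; once $\tilde f_\#$ is known to be a homomorphism (which is asserted in the text preceding the proposition) the two identities are forced by pure group theory.

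For the first formula, I would expand $\tilde f_\#(xy)$ in two ways. On the one hand, $\tilde f_\#(xy) = (\phi_1(xy),\dots,\phi_n(xy);\sigma_{xy})$. On the other hand, since $\tilde f_\#$ is a homomorphism, this equals the semidirect product
\[
(\phi_1(x),\dots,\phi_n(x);\sigma_x)\,(\phi_1(y),\dots,\phi_n(y);\sigma_y),
\]
which by the multiplication rule stated in the excerpt has $k$-th $\pi$-coordinate $\phi_k(x)\phi_{\sigma_x^{-1}(k)}(y)$ and $\Sigma_n$-coordinate $\sigma_x\sigma_y$. Reading off the $k$-th coordinate gives the claimed formula for $\phi_k(xy)$, and as a bonus reading the $\Sigma_n$-coordinate confirms that $\sigma$ itself is indeed a homomorphism.

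For the second formula, I would analogously apply $\tilde f_\#(x^{-1}) = \tilde f_\#(x)^{-1}$ and use the inversion rule from the excerpt, which gives
\[
\tilde f_\#(x)^{-1} = \bigl(\phi_{\sigma_x(1)}(x)^{-1},\dots,\phi_{\sigma_x(n)}(x)^{-1};\sigma_x^{-1}\bigr).
\]
Matching $k$-th coordinates yields $\phi_k(x^{-1}) = \phi_{\sigma_x(k)}(x)^{-1}$. (One could alternatively derive this as a consequence of the first formula by setting $y = x^{-1}$ and solving, but the direct route via the inversion formula is cleaner.)

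There is no substantive obstacle here; the only risk is a notational slip with the indices $\sigma_x(k)$ versus $\sigma_x^{-1}(k)$, which must be tracked carefully through the semidirect product conventions. The main thing to verify at the outset is that the conventions adopted in the excerpt (the action $(\alpha;\sigma)\cdot(\tilde x_1,\dots,\tilde x_n) = (\alpha_1\tilde x_{\sigma^{-1}(1)},\dots)$ and the corresponding multiplication rule) are consistent with one another, so that the indices in the final answer come out as stated.
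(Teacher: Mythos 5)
Your proposal is correct and follows essentially the same route as the paper: both derive the first identity by expanding $\tilde f_\#(xy)=\tilde f_\#(x)\tilde f_\#(y)$ with the stated semidirect product multiplication rule and reading off the $k$-th coordinate. For the second identity you read coordinates from the stated inversion formula for $\tilde f_\#(x)^{-1}$, while the paper instead substitutes $y=x^{-1}$ into the first identity and reindexes (the alternative you yourself mention); these are trivially equivalent, and your index bookkeeping $\phi_k(x^{-1})=\phi_{\sigma_x(k)}(x)^{-1}$ is correct.
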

\begin{proof}
Lemma 2.5 of \cite{bdds20} shows that $\tilde f_\#:\pi \to \pi^n\rtimes \Sigma_n$ is a homomorphism. Thus we can compute:
\begin{align*} 
\tilde f_\#(xy) &= \tilde f_\#(x) \tilde f_\#(y) = (\phi_1(x),\dots,\phi_n(x); \sigma_x)(\phi_1(y),\dots,\phi_n(y);\sigma_y) \\
&= (\phi_1(x)\phi_{\sigma_x^{-1}(1)}(y), \dots, \phi_n(x)\phi_{\sigma_x^{-1}(n)}(y); \sigma_x \circ \sigma_y)
\end{align*}
and reading coordinate $k$ above gives the first desired formula.

For the second formula, letting $y=x^{-1}$ in the first formula gives:
\[ 1 = \phi_k(x) \phi_{\sigma^{-1}_{x}(k)}(x^{-1}) \]
and so $\phi_k(x)^{-1} = \phi_{\sigma^{-1}_{x}(k)}(x^{-1})$ which (after reindexing) gives the second formula.
\end{proof}

There is an algebraic Reidemeister equivalence relation on $\pi \times \{1,\dots,n\}$ defined as follows: 
\begin{equation}\label{tceq}
(\alpha,k) \sim (\beta,j) \iff \exists \gamma\in \pi \text{ with } \sigma_\gamma(k)=j \text{ and } \alpha = \phi_j(\gamma)^{-1} \beta \gamma 
\end{equation}
where we have written $\sigma(\gamma)\in \Sigma_n$ as $\sigma_\gamma$. 

The equivalence classes with respect to this relation form the set of \emph{Reidemeister classes}, denoted $\Reid(\tilde f_\#) = \pi\times \{1,\dots,n\} / \sim$. The class represented by $(\alpha,k)$ will be written as $[(\alpha,k)]_{\tilde f_\#}$, or simply as $[(\alpha,k)]$ when the context is clear.

\begin{prop}(\cite{bdds20}, proof of Theorem 2.6)
With the notations above, for any $\gamma\in \pi$ we have:
\[ \tilde f_k(\gamma \tilde x) = \phi_k(\gamma) \tilde f_{\sigma_\gamma^{-1}(k)}(\tilde x). \]
\end{prop}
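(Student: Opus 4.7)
The plan is to read off the desired identity directly from equation (\ref{inducedeq}) by evaluating both sides at a point $\tilde x$ and then extracting the $k$th coordinate. All the structural information is already packaged in the definitions, so the job is essentially bookkeeping with the semidirect-product action.

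First I would apply both sides of $\tilde f \circ \gamma = \tilde f_\#(\gamma) \circ \tilde f$ to a point $\tilde x \in \tilde X$. The left-hand side becomes $\tilde f(\gamma \tilde x) = (\tilde f_1(\gamma \tilde x), \dots, \tilde f_n(\gamma \tilde x))$. For the right-hand side I would substitute the coordinate expression $\tilde f_\#(\gamma) = (\phi_1(\gamma), \dots, \phi_n(\gamma); \sigma_\gamma)$ and invoke the displayed formula for the action of $\pi^n \rtimes \Sigma_n$ on $F_n(\tilde X, \pi)$, namely
\[ (\alpha_1,\dots,\alpha_n;\sigma)\cdot(\tilde x_1,\dots,\tilde x_n) = (\alpha_1 \tilde x_{\sigma^{-1}(1)}, \dots, \alpha_n \tilde x_{\sigma^{-1}(n)}), \]
applied with $\tilde x_i = \tilde f_i(\tilde x)$. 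This yields
\[ \tilde f_\#(\gamma)\cdot \tilde f(\tilde x) = \bigl(\phi_1(\gamma)\tilde f_{\sigma_\gamma^{-1}(1)}(\tilde x),\dots,\phi_n(\gamma)\tilde f_{\sigma_\gamma^{-1}(n)}(\tilde x)\bigr). \]

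Finally I would equate the two tuples coordinatewise and read off the $k$th entry to obtain $\tilde f_k(\gamma \tilde x) = \phi_k(\gamma)\tilde f_{\sigma_\gamma^{-1}(k)}(\tilde x)$, which is the claim.

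There is no real obstacle here; the statement is essentially a definition-chase. The only place one must be careful is with the direction of the permutation: the action sends coordinate $\sigma^{-1}(k)$ to coordinate $k$, so the subscript that appears on $\tilde f$ is $\sigma_\gamma^{-1}(k)$ rather than $\sigma_\gamma(k)$. Keeping this straight is the single substantive point, and it follows unambiguously from the action formula already stated in the paper.
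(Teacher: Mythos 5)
Your proposal is correct and matches the paper's own proof: the paper writes equation \eqref{inducedeq} in coordinates, applies the semidirect-product action to $(\tilde f_1,\dots,\tilde f_n)$, and reads off the $k$th coordinate, exactly as you do (evaluated at $\tilde x$). Your remark about the direction of the permutation is the right point of care, and it is handled correctly.
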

\begin{proof}
Writing \eqref{inducedeq} in coordinates gives:
\begin{align*}
(\tilde f_1,\dots,\tilde f_n)\circ \gamma &= \tilde f_\#(\gamma) \circ \tilde f \\
&= (\phi_1(\gamma),\dots,\phi_n(\gamma); \sigma_\gamma) (\tilde f_1,\dots,\tilde f_n) \\
&= (\phi_1(\gamma)\tilde f_{\sigma^{-1}_\gamma(1)}, \dots, \phi_n(\gamma)\tilde f_{\sigma^{-1}_\gamma(n)}),
\end{align*}
and reading coordinate $k$ above gives the result.
\end{proof}

\begin{note*}
The exposition in \cite{bdds20} instead lets $(\alpha,k)$ represent the fixed point class $p\Fix(\alpha \tilde f_k)$ (while we use $\alpha^{-1}\tilde f_k$ rather than $\alpha\tilde f_k$). This is merely a convention, but our use of $\alpha^{-1}$ is required in order for the trace formula of Theorem \ref{weckentracethm} to hold. 

This convention also effects the Reidemeister relation: in \cite{bdds20}, the relation uses $\sigma(j)=k$ and $\alpha = \gamma \beta \phi_j(\gamma^{-1})$. To accomodate the choice of $\alpha^{-1}\tilde f_k$ rather than $\alpha\tilde f_k$, the appropriate formula is instead given by \eqref{tceq}.

To demonstrate the consistency of these choices, we will briefly show that $p\Fix(\alpha^{-1}\tilde f_k) = p\Fix(\beta^{-1}\tilde f_j)$ implies \eqref{tceq}. (In fact it is equivalent to \eqref{tceq} and more is true, see \cite{bdds20}.)

Let $x\in p\Fix(\alpha^{-1}\tilde f_k)$. Then there is some $\tilde x$ with $p(\tilde x)=x$ and $\alpha^{-1}\tilde f_k(\tilde x) = \tilde x$. Since $p\Fix(\alpha^{-1}\tilde f_k) = p\Fix(\beta^{-1}\tilde f_j)$, there is some $\gamma \in \pi$ with $\gamma\tilde x \in p\Fix(\beta^{-1}\tilde f_j)$, that is, $\beta^{-1}\tilde f_{j}(\gamma \tilde x) = \gamma \tilde x$. Equivalently, we have 
\[ \gamma^{-1}\beta^{-1} \phi_j(\gamma)\tilde f_{\sigma^{-1}_\gamma(j)}(\tilde x) =\tilde x. \] 
Combining the above gives:
\[ \alpha^{-1}\tilde f_k(\tilde x) = \gamma^{-1}\beta^{-1} \phi_j(\gamma)\tilde f_{\sigma^{-1}_\gamma(j)}(\tilde x). \]
Thus the two maps $\alpha^{-1}\tilde f_k$ and $\gamma^{-1}\beta^{-1} \phi_j(\gamma)\tilde f_{\sigma^{-1}_\gamma(j)}$ agree at a point. But since $\tilde f$ maps into $F_n(\tilde X, \pi)$, this is only possible when these two maps are equal. Thus we must have $k=\sigma^{-1}_\gamma(j)$, and $\alpha^{-1}=\gamma^{-1}\beta^{-1}\phi_j(\gamma)$, which implies \eqref{tceq}. 
\end{note*}

The Reidemeister trace we define will be an element of the abelian group $\Z\Reid(\tilde f_\#)$ of formal integer sums of Reidemeister classes. 

We will also make use of the fixed point index of an $n$-valued map, first defined by Schirmer in \cite{schi84a}. Let $f:X\to D_n(X)$ be an $n$-valued map, and $U\subseteq X$ an open subset such that $\Fix(f)\cap U$ is compact. Then $\ind(f,U) \in \Z$ is defined, and obeys the following properties:
\begin{itemize}
\item (Additivity) If $\Fix(f)\cap U \subset U_1 \cup U_2$ with $U_1 \cap U_2 = \emptyset$, then 
\[ \ind(f,U) = \ind(f,U_1) + \ind(f,U_2). \]
\item (Excision) If $V$ is an open subset of $U$ and $\Fix(f,U) \subset V$, then $\ind(f,U) =\ind(f,V)$.
\item (Homotopy) If $f$ and $g$ are ``admissably homotopic'' on $U$ then $\ind(f,U) = \ind(g,U)$.
\item (Solution) If $\ind(f,U)\neq 0$, then $\Fix(f) \cap U \neq \emptyset$.
\end{itemize}
The main result of \cite{stae18} is that this index is uniquely determined by the additivity and homotopy properties, together with a normalization that sets the index of a constant to 1.

Schirmer's original definition of the index is in terms of splittings: if $f$ splits into single valued maps $f = \{f_1,\dots,f_n\}$ on $U$, and $x$ is an isolated fixed point of $f_k$ in $U$, then the $n$-valued index $\ind(f,U)$ is defined to be the single-valued index $\ind(f_k,U)$.

\mathchardef\myhyphen="2D

The theory of $n$-valued maps as developed by Schirmer and Brown is paralleled by work of Crabbe which also discusses fixed point invariants of $n$-valued maps. Crabbe's definition of an $n$-valued map is different, using finite covers, resulting in maps whose set of values have cardinality at most (but not always equal to) $n$. In \cite{crab15}, Crabbe defines an invariant $h\myhyphen L(f/p)$ which is analagous to our Reidemeister trace. Our presentation will attempt to mimic classical single-valued constructions whenever possible.

In the next section we define the Reidemeister trace of an $n$-valued map, and prove that it obeys a familiar trace formula. Section \ref{lemsec} is devoted to the proof of an algebraic lemma on the homotopy invariance of the trace formula. In Section \ref{propertiessec} we demonstrate some basic properties of the Reidemeister trace, and in Section \ref{localsec} we discuss a local version, which we use to derive an averaging formula.

\section{Definition of the Reidemeister trace}\label{defsec}
The Reidemeister trace of an $n$-valued map $f:X\to D_n(X)$ and a lifting $\tilde f: \tilde X \to F_n(\tilde X,\pi)$ is an element of $\Z\Reid(\tilde f_\#)$ in which each term is a Reidemeister class of $\tilde f$, with coefficient given by its fixed point index. We use a straightforward formal sum as the definition of the Reidemeister trace, and later in Theorem \ref{weckentracethm} we demonstrate that it can be computed as an alternating sum of traces of certain matrices.

\begin{defn}\label{RTdef}
Let $f:X\to D_n(X)$ be an $n$-valued map, and let $\tilde f: \tilde X \to F_n(\tilde X,\pi)$ be a lifting with $\tilde f = (\tilde f_1,\dots,\tilde f_n)$. Then we define the \emph{Reidemeister trace} of $f$ and $\tilde f$ as:
\[ \RT(f,\tilde f) = \sum_{[(\alpha,k)] \in \Reid(\tilde f_\#)} \ind(f,U_{[(\alpha,k)]}) [(\alpha,k)]_{\tilde f_\#} \in \Z\Reid(\tilde f_\#), \]
where each $U_{[(\alpha,k)]} \subset X$ is an open set containing $p\Fix(\alpha^{-1}\tilde f_k)$ and no other fixed points of $f$.
\end{defn}
By the excision property of the index, alternative choices of the sets $U_{[(\alpha,k)]}$ will give the same values of the index, so the definition above does not depend on the choice of sets $U_{[(\alpha,k)]}$.

The Reidemeister trace of an $n$-valued map is related to the Nielsen and Lefschetz numbers in the expected way:

\begin{thm}
Let $f:X\to D_n(X)$ be an $n$-valued map, with some lift $\tilde f:X\to F_n(X,\pi)$. The sum of coefficients in $\RT(f,\tilde f)$ is the Lefschetz number $L(f)$, and the number of nonzero terms is the Nielsen number $N(f)$.
\end{thm}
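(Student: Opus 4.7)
The plan is to unpack the definition of $\RT(f,\tilde f)$ and use additivity of the fixed point index. First I would show that the sets $\{p\Fix(\alpha^{-1}\tilde f_k)\}$, as $[(\alpha,k)]$ ranges over $\Reid(\tilde f_\#)$, partition $\Fix(f)$. Given any $x\in \Fix(f)$, pick $\tilde x\in p^{-1}(x)$; then $x\in f(x)$ forces $\tilde f_k(\tilde x)=\alpha \tilde x$ for some $k$ and some $\alpha\in\pi$, so $\tilde x\in \Fix(\alpha^{-1}\tilde f_k)$ and hence $x\in p\Fix(\alpha^{-1}\tilde f_k)$. Conversely, the argument in the Note preceding Definition \ref{RTdef} shows that if $p\Fix(\alpha^{-1}\tilde f_k)$ and $p\Fix(\beta^{-1}\tilde f_j)$ share a point, then $(\alpha,k)\sim (\beta,j)$, so distinct Reidemeister classes yield disjoint subsets of $\Fix(f)$.

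Since $X$ is a compact polyhedron, $\Fix(f)$ is compact and only finitely many of the sets $p\Fix(\alpha^{-1}\tilde f_k)$ are nonempty. For the empty classes we may take $U_{[(\alpha,k)]}=\emptyset$, which contributes $0$ to the formal sum. Choose the remaining finitely many $U_{[(\alpha,k)]}$ to be pairwise disjoint, and let $U$ be their union, an open neighborhood of $\Fix(f)$. Iterated applications of the additivity property, followed by excision on $(U,X)$, give
\[ \sum_{[(\alpha,k)]}\ind(f,U_{[(\alpha,k)]}) = \ind(f,U) = \ind(f,X) = L(f), \]
where the last equality uses that the Lefschetz number of an $n$-valued selfmap of a compact polyhedron is defined as the total index $\ind(f,X)$.

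For the Nielsen count, recall that $N(f)$ is the number of essential fixed point classes of $f$, where a class is essential when its index is nonzero. By the partition established above, the fixed point classes of $f$ are exactly the nonempty sets $p\Fix(\alpha^{-1}\tilde f_k)$, and by excision, the index of the class indexed by $[(\alpha,k)]$ coincides with the coefficient $\ind(f,U_{[(\alpha,k)]})$ appearing in $\RT(f,\tilde f)$. Thus the nonzero terms correspond bijectively to the essential classes, so their count equals $N(f)$. There is no serious obstacle here; the theorem is essentially bookkeeping once the correspondence between Reidemeister classes and fixed point classes (whose main content lies in the Note preceding the definition) is in hand, and the result is a direct consequence of the additivity axiom for the $n$-valued index.
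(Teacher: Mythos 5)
Your argument is correct and follows essentially the same route as the paper: read the coefficients off Definition \ref{RTdef}, use excision and additivity of the index to identify their sum with $\ind(f,X)=L(f)$, and identify the nonzero terms with the essential fixed point classes to get $N(f)$. Your added details (the partition of $\Fix(f)$ by the sets $p\Fix(\alpha^{-1}\tilde f_k)$ and the compactness/finiteness remark) are fine and simply make explicit what the paper's shorter proof leaves implicit.
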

\begin{proof}
Let $c:\Z\Reid(\tilde f_\#) \to \Z$ be the sum-of-coefficients homomorphism. Then by Theorem \ref{weckentracethm} we have
\[ c(\RT(f,\tilde f)) = \sum_{[(\alpha,k)]\in \Reid(\tilde f_\#)} \ind(f,U_{[(\alpha,k)]}). \]
Since $\bigcup U_{[(\alpha,k)]}$ contains all fixed points, by the excision and additivity properties of the index we have $c(\RT(f,\tilde f)) = \ind(f,X) = L(f)$. 

For the statement about the Nielsen number, Definition \ref{RTdef} immediately implies that the number of nonzero terms of $\RT(f,\tilde f)$ is the number of fixed point classes having nonzero index, which is the definition of $N(f)$.
\end{proof}

Theorem 2.7 of \cite{bdds20} shows that if $f$ is homotopic to $g$, then $\tilde f_\# = g_\#$. Thus we may identify $\Z\Reid(\tilde f_\#)$ with $\Z\Reid(g_\#)$, and we have the familiar homotopy invariance property.

\begin{thm}\label{htp}
Let $f,g:X\to D_n(X)$ be homotopic by a homotopy $H:X\times I \to D_n(X)$. Let $\tilde f=(\tilde f_1,\dots,\tilde f_n)$ be a lifting of $f$, and let $\tilde g=(\tilde g_1,\dots,\tilde g_n)$ be the lifting of $g$ obtained by lifting the homotopy, starting at $\tilde f$. Then, identifying $\Z\Reid(\tilde f_\#)$ with $\Z\Reid(g_\#)$, we have:
\[ \RT(f,\tilde f) = \RT(g,\tilde g). \]
\end{thm}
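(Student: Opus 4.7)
The overall plan is to reduce the statement to homotopy invariance of the $n$-valued fixed point index, applied one Reidemeister class at a time. Theorem 2.7 of \cite{bdds20} supplies $\tilde f_\# = \tilde g_\#$, so the two traces live in the same group $\Z\Reid(\tilde f_\#)$, and it suffices to compare coefficients class by class.

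For each class $[(\alpha,k)]$, I would lift the homotopy to $\tilde H : \tilde X \times I \to F_n(\tilde X, \pi)$ with $\tilde H(\cdot,0)=\tilde f$ and $\tilde H(\cdot,1)=\tilde g$. Each composition $\alpha^{-1}\tilde H_k:\tilde X\times I \to \tilde X$ is a single-valued homotopy from $\alpha^{-1}\tilde f_k$ to $\alpha^{-1}\tilde g_k$, and its fixed points project under $p$ to the ``class track''
\[ C_{[(\alpha,k)]} = \{(x,t) \in X\times I : x \in p\Fix(\alpha^{-1}\tilde H_k(\cdot,t))\}. \]
These tracks are closed, pairwise disjoint, and jointly exhaust the compact fixed set $\Fix(H)\subset X\times I$. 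The goal is to produce a single open $U \subset X$ that contains $p\Fix(\alpha^{-1}\tilde f_k)$ and $p\Fix(\alpha^{-1}\tilde g_k)$, contains no other fixed points of $f$ or of $g$, and satisfies $\Fix(H_t)\cap\partial U=\emptyset$ for every $t \in I$. Once such a $U$ is in hand, the homotopy axiom of the $n$-valued index gives $\ind(f,U)=\ind(g,U)$, and by Definition \ref{RTdef} this is exactly the coefficient of $[(\alpha,k)]$ in each Reidemeister trace.

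The main obstacle is constructing the isolating $U$. The tracks $C_{[(\alpha,k)]}$ are disjoint in $X\times I$, but their projections to $X$ need not be disjoint, so a cylindrical neighborhood $U\times I$ cannot in general separate the $[(\alpha,k)]$ track from the others (a single $x \in X$ can lie in the projections of two different tracks, arising as a fixed point of $H_t$ in distinct classes at distinct times $t$). The expected fix is a subdivision argument: partition $I$ into finitely many closed subintervals $[t_i,t_{i+1}]$ on each of which the class projections are separated in $X$, build cylindrical isolating neighborhoods $U_i\times[t_i,t_{i+1}]$ on each piece, apply the homotopy axiom to each piece, and chain the resulting equalities using excision and additivity to bridge successive choices $U_i$. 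Once this separation is established, the rest is immediate.
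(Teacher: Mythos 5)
Your proposal is correct in outline, but it takes a genuinely different route from the paper. The paper's proof is essentially a citation: Lemmas 6.2 and 6.4 of \cite{schi84b} already say that the homotopy induces a bijective, index-preserving correspondence between fixed point classes of $f$ and $g$, and the only thing left to check is the point your setup makes automatic, namely that the correspondence respects the algebraic labels --- if $C=p\Fix(\alpha^{-1}\tilde f_k)$ then the corresponding class of $g$ is $p\Fix(\alpha^{-1}\tilde g_k)$, since the lifted homotopy carries $\tilde f_k$ to $\tilde g_k$. What you propose is to re-derive the content of Schirmer's lemmas directly from the axioms of the $n$-valued index via class tracks and a subdivision of $I$; this buys a self-contained argument (in the spirit of the axiomatic treatment in \cite{stae18}) at the cost of redoing known work, and it is completable, but two details should be made explicit if you carry it out. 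First, finiteness and separation: the tracks indexed by distinct classes of $\Reid(\tilde f_\#)$ are disjoint (this uses the argument of the Note in the introduction, that inequivalent pairs $(\alpha,k)$ give disjoint fixed point classes, applied to each $H_t$, together with $(\tilde H_t)_\#=\tilde f_\#$ from Theorem 2.7 of \cite{bdds20}), and since fixed point classes are open as well as closed in the compact set $\{(x,t): x\in H_t(x)\}$, only finitely many tracks are nonempty; hence they are a positive distance $\epsilon$ apart, and any subdivision of $I$ with mesh less than $\epsilon$ forces the projections to $X$ of the track portions over each slab $X\times[t_i,t_{i+1}]$ to be disjoint compact sets, which is exactly what makes your cylindrical neighborhoods $U_i$ exist and makes each slab an admissible homotopy on $U_i$ (no fixed points of any $H_t$, $t\in[t_i,t_{i+1}]$, on $\partial U_i$). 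Second, the chaining: at each junction time both $U_i$ and $U_{i+1}$ meet $\Fix(H_{t_{i+1}})$ in precisely the class-$[(\alpha,k)]$ points, so excision gives $\ind(H_{t_{i+1}},U_i)=\ind(H_{t_{i+1}},U_{i+1})$, and combining with the homotopy axiom on each slab yields $\ind(f,U_0)=\ind(g,U_m)$, which by Definition \ref{RTdef} is the equality of the $[(\alpha,k)]$-coefficients of $\RT(f,\tilde f)$ and $\RT(g,\tilde g)$. With these points supplied, your class-by-class argument proves the theorem; alternatively, the subdivision step can simply be replaced by the citation to \cite{schi84b}, which is the paper's choice.
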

\begin{proof}
In \cite{schi84b}, Schirmer discusses fixed point classes of $n$-valued maps and their behavior under homotopy. Lemma 6.2 of \cite{schi84b} shows that the homotopy $H$ gives a bijective correspondence between fixed point classes of $f$ and $g$. Lemma 6.4 of \cite{schi84b} shows that this correspondence preserves the index.

Let $C$ be a fixed point class of $f$, and let $D$ be the fixed point class corresponding to $C$ via the homotopy $H$. Then $\ind(f,U_C) = \ind(f,U_D)$. It remains to show that $[C]_{\tilde f_\#} = [D]_{g_\#}$. 

Let $[C]_{\tilde f_\#} = [(\alpha,k)]_{\tilde f_\#}$, so $C = p\Fix(\alpha^{-1} \tilde f_k)$. Since $D$ corresponds to $C$ via the homotopy, and the homotopy lifts to carry $\tilde f_k$ to $\tilde g_k$, we have $D = p \Fix(\alpha^{-1} \tilde g_k)$, and thus $[D]_{g_\#} = [C]_{\tilde f_\#}$ as desired.
\end{proof}

When the fixed point set of $f$ is finite, we can express the Reidemeister trace by summing over the fixed point set.

For an isolated fixed point $x\in \Fix(f)$ and lift $\tilde f$, let $[x] \in \Reid(\tilde f_\#)$ denote the Reidemeister class of the fixed point class containing $x$. Specifically, we will have $[x] = [(\alpha,k)]$ when $x \in p\Fix(\alpha^{-1}\tilde f_k)$, or equivalently, there is some $\tilde x\in \tilde X$ with $p(\tilde x)=x$ and $\tilde f_k(\tilde x) = \alpha\tilde x$.

\begin{thm}\label{regularwecken}
Let $f:X\to D_n(X)$ be an $n$-valued map with finite fixed point set, and let $\tilde f: \tilde X \to F_n(\tilde X,\pi)$ be a lifting. Then:
\[ \RT(f,\tilde f) = \sum_{x\in \Fix(f)} \ind(f,U_x) [x], \]
where $U_x$ is a neighborhood of $x$ containing no other fixed points of $f$.
\end{thm}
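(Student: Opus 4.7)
The plan is to start from Definition \ref{RTdef} and combine additivity of the index with the fact that, when $\Fix(f)$ is finite, each fixed point class is a finite set of isolated fixed points. Recall from the definition that
\[ \RT(f,\tilde f) = \sum_{[(\alpha,k)]\in \Reid(\tilde f_\#)} \ind(f,U_{[(\alpha,k)]})\,[(\alpha,k)]_{\tilde f_\#}, \]
where $U_{[(\alpha,k)]}$ is any open set containing the fixed point class $p\Fix(\alpha^{-1}\tilde f_k)$ and no other fixed points of $f$.

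First I would use the finiteness of $\Fix(f)$ to choose pairwise disjoint open neighborhoods $U_x$, one for each $x\in\Fix(f)$, with $U_x$ containing no other fixed points. Then for each Reidemeister class $[(\alpha,k)]$, the subset $p\Fix(\alpha^{-1}\tilde f_k)\subseteq\Fix(f)$ is finite, and the disjoint union
\[ U_{[(\alpha,k)]} := \bigsqcup_{x\in p\Fix(\alpha^{-1}\tilde f_k)} U_x \]
is an admissible choice for the neighborhood appearing in Definition \ref{RTdef}. By the additivity property of $\ind$, we obtain
\[ \ind(f,U_{[(\alpha,k)]}) = \sum_{x\in p\Fix(\alpha^{-1}\tilde f_k)} \ind(f,U_x). \]

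Substituting back into the definition yields
\[ \RT(f,\tilde f) = \sum_{[(\alpha,k)]\in\Reid(\tilde f_\#)} \sum_{x\in p\Fix(\alpha^{-1}\tilde f_k)} \ind(f,U_x)\,[(\alpha,k)]_{\tilde f_\#}. \]
To collapse this double sum into a single sum over $\Fix(f)$, I would invoke the fact that the fixed point classes partition $\Fix(f)$: each $x\in\Fix(f)$ lies in exactly one class $p\Fix(\alpha^{-1}\tilde f_k)$, namely the one indexed by $[x]$ as defined in the paragraph preceding the theorem statement. Re-indexing the double sum by $x$ replaces $[(\alpha,k)]_{\tilde f_\#}$ with $[x]$ and produces exactly $\sum_{x\in \Fix(f)}\ind(f,U_x)[x]$, as desired.

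There is no serious obstacle here; the only point requiring care is the justification that the fixed point classes $\{p\Fix(\alpha^{-1}\tilde f_k)\}_{[(\alpha,k)]}$ do partition $\Fix(f)$, so that the reindexing is valid. This is built into the definition of the Reidemeister relation \eqref{tceq} and the identification of algebraic classes with geometric fixed point classes discussed in the note preceding Definition \ref{RTdef} (and established in \cite{bdds20}); invoking that identification closes the argument.
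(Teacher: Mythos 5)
Your argument is correct and is essentially the paper's own proof: both decompose the index of each class neighborhood $U_{[(\alpha,k)]}$ into a sum of indices over the finitely many isolated fixed points of that class, using additivity (and excision) of the index, and then regroup the double sum via the identification of $[x]$ with $[(\alpha,k)]$. Your version merely spells out the choice of disjoint neighborhoods and the reindexing step, which the paper's proof leaves implicit.
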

\begin{proof}
It suffices to show that, if $[(\alpha,k)]$ represents a fixed point class $\{x_1,\dots,x_m\}$, then $\ind(f,U_{[(\alpha,k)]})= \sum_{i=1}^m \ind(f,U_{x_i})[x_i]$. Because the only fixed points in $U_{[(\alpha,k)]}$ are the points $\{x_1,\dots,x_m\}$, this follows from the excision and additivity properties of the index.
\end{proof}

Now we will develop the formula for the Reidemeister trace as an alternating trace sum of certain matrices. 
The trace formula for $\RT(f,\tilde f)$ is only defined when $f$ is a simplicial map on a polyhedron.
In \cite{schi84a}, Schirmer defines the notion of an $n$-valued simplicial map between simplicial complexes. Let $K$ and $L$ be simplicial complexes with geometric realizations $|K|$ and $|L|$, and let $f\colon |K| \to D_n(|L|)$ be an $n$-valued map. Then $f$ is an \emph{$n$-valued simplicial map} when $f$ maps any simplex $\sigma$ of $K$ to a disjoint union of $n$ simplices of $L$. We call such a simplicial map \emph{regular} when all fixed points are in the interior of maximal simplices. If $f$ is regular and $x\in \Fix(f)$, then $f$ carries the simplex containing $x$ homeomorphically onto itself, and so there is a well-defined orientation $o(f,x)\in \{+1,-1\}$ of $f$ at $x$.

%Theorem 4 of \cite{schi84a} implies the following simplicial approximation theorem:
%\begin{thm}(Schirmer) 
%Let $f:X\to D_n(X)$ be an $n$-valued map. Then there is a triangulation $K$ of $X$ and a subdivision $K'$ such that, for any $\epsilon>0$ there is a regular $n$-valued simplicial map $f'\colon |K'| \to D_n(|K|)$ with $d(f,f')<\epsilon$.\todo{move to later?}
%\end{thm}

Let $C_q(K)$ denote the dimension $q$ chain group of a simplicial complex $K$. When $K'$ is a subdivision of $K$ and $q\in \N$, there is a natural homomorphism $\epsilon^q: C_q(K) \to C_q(K')$ which takes any $q$-simplex $\sigma \in K$ and sets $\epsilon^q(\sigma)$ equal to the sum of all $q$-simplices from $K'$ which arise from subdivision of $\sigma$. 

If $K'$ is a subdivision of $K$ and $f:K' \to K$ is simplicial, let $f^q:C_q(K')\to C_q(K)$ be the homomorphism induced by $f$. Then the composition $f^q\circ \epsilon^q$ is an endomorphism of $C_q(K)$. 

Letting $p:\tilde X \to X$ be the universal cover of $X$, any triangulation $K$ of $X$ induces a triangulation $\tilde K$ on $\tilde X$. We will label the simplices of $\tilde K$ as follows: for each simplex $\sigma\in K$, arbitrarily choose a lifting $\tilde \sigma \in \tilde K$. All other simplices of $\tilde K$ projecting to $\sigma$ can be described as $\gamma \tilde \sigma$ for some $\gamma\in \pi$. In this way, we view the chain group $C_q(\tilde K)$ as a $\Z\pi$-module, and any homomorphism $f:C_q(\tilde K) \to C_q(\tilde K)$ can be expressed as a matrix with entries in $\Z\pi$, and has a trace (diagonal sum) which we denote $\tr f\in \Z\pi$. 

Given an $n$-valued map $f$ and lifting $\tilde f$, let $\rho: \Z(\pi \times \{1,\dots,n\}) \to \Z\Reid(\tilde f_\#)$ be the natural quotient homomorphism. The following theorem is the justification for the term ``trace'' in describing $\RT(f,\tilde f)$. 
\begin{thm}\label{regweckentracethm}
Let $K$ be a complex with dimension $d$, let $K'$ be a subdivision of $K$, and let $f:|K'| \to D_n(|K|)$ be a regular $n$-valued simplicial map. Let $\tilde f = (\tilde f_1,\dots, \tilde f_n): |K'| \to F_n(|K|,\pi)$ be a lifting. Then:
\begin{equation}\label{traceeq}
\RT(f,\tilde f) = \sum_{q=0}^d \sum_{k=1}^n (-1)^q  \rho(\tr (\tilde f_k^q \circ \epsilon^q) \times \{k\}) \in \Z \Reid(\tilde f_\#) .
\end{equation}
Above we have abused notation slightly in the product with $\{k\}$. For a group ring element $s = \sum c_ia_i \in \Z\pi$, we write $s \times \{k\}$ to indicate the group ring element $\sum c_i (a_i,k) \in \Z(\pi \times \{1,\dots,n\})$. 
\end{thm}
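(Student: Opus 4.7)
The plan is to reduce the identity to Theorem \ref{regularwecken}, which gives
\[ \RT(f,\tilde f)=\sum_{x\in\Fix(f)}\ind(f,U_x)[x], \]
and to verify that the alternating trace on the right-hand side of \eqref{traceeq} reproduces this expression. The argument parallels the classical proof of the simplicial trace formula in the single-valued case, with additional bookkeeping for the branch index $k$ and the Reidemeister relation on $\pi\times\{1,\dots,n\}$.

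For each simplex $\sigma\in K$ choose a lift $\tilde\sigma\in\tilde K$, yielding a $\Z\pi$-basis of each $C_q(\tilde K)$. In this basis the diagonal entry of $\tilde f_k^q\circ\epsilon^q$ at $\tilde\sigma$ equals $\sum_{\tilde\tau\subseteq\tilde\sigma}\lambda_{\tilde\tau}\alpha_{\tilde\tau}\in\Z\pi$, summed over the $q$-simplices $\tilde\tau$ of the lifted subdivision $\tilde K'$ sitting inside $\tilde\sigma$, where $\tilde f_k(\tilde\tau)=\lambda_{\tilde\tau}\alpha_{\tilde\tau}\tilde\sigma$ as oriented simplices with $\lambda_{\tilde\tau}\in\{-1,0,+1\}$. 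Each nonzero term produces a homeomorphism $\alpha_{\tilde\tau}^{-1}\tilde f_k|_{\tilde\tau}:\tilde\tau\to\tilde\sigma$ whose domain lies inside its codomain. Brouwer's fixed point theorem, applied to the self-map of $\tilde\sigma$ obtained by composing the inverse of this homeomorphism with the inclusion $\tilde\tau\hookrightarrow\tilde\sigma$, yields a fixed point $\tilde x\in\tilde\tau$ of $\alpha_{\tilde\tau}^{-1}\tilde f_k$, hence a fixed point $x=p(\tilde x)\in\sigma$ of $f$ in the Reidemeister class $[(\alpha_{\tilde\tau},k)]$. Regularity forces $x$ to lie in the interior of a maximal simplex, so $\sigma$ must be $d$-dimensional; consequently all diagonal entries for $q<d$ vanish.

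Conversely, each fixed point $x\in\Fix(f)$ lies in the interior of a unique maximal simplex $\sigma$, picks out a unique branch index $k$ from the local splitting of $f$, and (given a lift $\tilde x$) a unique subdivision piece $\tilde\tau\ni\tilde x$ and element $\alpha\in\pi$ with $\tilde f_k(\tilde\tau)=\pm\alpha\tilde\sigma$; this establishes a bijection between the nonzero diagonal terms at $q=d$ and the fixed points of $f$. The sign $\lambda_{\tilde\tau}\in\{\pm 1\}$ coincides with the simplicial orientation $o(f,x)$ of the branch $f_k$ at $x$, and the classical Lefschetz-Hopf computation at an affine homeomorphism fixing an interior point of a $d$-simplex, combined with Schirmer's splitting definition of the $n$-valued index, relates this orientation to $\ind(f,U_x)$ modulo the $(-1)^d$ factor supplied by the alternating sum. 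After applying $\rho$ and tagging with $\{k\}$, the contribution of $x$ to the right-hand side of \eqref{traceeq} becomes exactly $\ind(f,U_x)[x]$, matching the formula from Theorem \ref{regularwecken}.

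The principal obstacle is the sign bookkeeping in this final step: three different orientation conventions (the chain-level $\lambda_{\tilde\tau}$, the branch orientation $o(f,x)$, and Schirmer's $n$-valued index via splitting) must be aligned with the global $(-1)^d$ factor from the alternating sum, so that all pieces combine to give precisely $\ind(f,U_x)$ rather than a competing sign. The geometric identification of fixed points via Brouwer, and the vanishing of lower-dimensional diagonal entries under regularity, are routine by comparison.
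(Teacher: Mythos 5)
Your proposal follows essentially the same route as the paper's proof: reduce to Theorem \ref{regularwecken}, match the nonzero diagonal entries of $\tr(\tilde f_k^q\circ\epsilon^q)$ with fixed points of $f$ via Brouwer's theorem and regularity (recording the class $[(\alpha,k)]$ from $\tilde f_k(\tilde x)=\alpha\tilde x$), and identify the sign $(-1)^{\dim\sigma_x}o(f,x)$ with $\ind(f,U_x)$ by the classical local index computation for an affine homeomorphism at an interior fixed point (the paper cites Proposition 3.2 of Geoghegan) combined with Schirmer's splitting definition of the $n$-valued index. The one small deviation is your claim that all diagonal entries with $q<d$ vanish, which requires every maximal simplex of $K$ to be $d$-dimensional; the paper avoids this assumption by keeping the factor $(-1)^{\dim\sigma_x}$ attached to each fixed point and summing over all $q$, and your argument should be phrased the same way.
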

\begin{proof}
It will be convenient to discuss the related trace:
\[ T(f,\tilde f) = \sum_{q=0}^d \sum_{k=1}^n (-1)^q  \tr (\tilde f_k^q \circ \epsilon^q) \times \{k\} \]
which is an element of $\Z(\pi \times \{1,\dots,n\})$. To prove the theorem, it will suffice to show that $\rho(T(f,\tilde f)) = \RT(f,\tilde f)$.

Any term contributing to the traces in $T(f,\tilde f)$ must arise from a basis simplex $\tilde \sigma$ satisfying $\tilde f_k(\tilde \sigma) = \alpha\tilde \sigma$ for some $\alpha\in \pi$. In this case, letting $\sigma = p(\tilde \sigma)$, we have $f(\sigma)=\sigma$. By Brouwer's fixed point theorem $f$ has a fixed point inside $\sigma$, and since $f$ is regular, there is only one fixed point inside $\sigma$. Conversely, any simplex $\sigma$ containing a fixed point will contribute some nonzero term to the trace.

Thus there is a correspondence between terms of $T(f,\tilde f)$ and simplices containing fixed points. In particular, for each fixed point $x$, let $\sigma_x$ be the simplex containing $x$, let $k_x\in \{1,\dots,n\}$ and $\alpha_x\in \pi$ such that $\tilde f_{k_x}(\tilde \sigma_x) = \alpha_x\tilde \sigma_x$, and let $o(f,x) \in \{+1,-1\}$ be the orientation with which $f$ maps $\sigma_x$ onto itself. Then we will have:
\[ T(f,\tilde f) = \sum_{x\in \Fix(f)} (-1)^{\dim \sigma_x} o(f,x)(\alpha_x,k_x). \]

The classical fixed point index can be computed in terms of the local orientation of the mapping. Proposition 3.2 of \cite{geog02} gives the following characterization: if $x$ is an isolated fixed point of $f$ inside a maximal simplex of dimension $d$, and $U_x$ is a neighborhood of $x$ containing no other fixed points, then $\ind(f,U_x) = (-1)^d$ if $f$ preserves the orientation near $x$, and $(-1)^{d+1}$ if $f$ reverses the orientation near $x$. Thus the factor of $(-1)^{\dim\sigma_x}o(f,x)$ above is exactly the fixed point index of $f_{k_x}$ at $x$, which equals the index of $f$ at $x$. Thus we have:
\[ T(f,\tilde f) = \sum_{x\in \Fix(f)} \ind(f,U_x) (\alpha_x,k_x). \]

By construction we have $x\in p\Fix(\alpha_x^{-1}\tilde f_{k_x})$, and thus $[(\alpha_x,k_x)] = [x]$. Thus we have:
\[ \rho(T(f,\tilde f)) = \sum_{x\in \Fix(f)} \ind(f,U_x) [x] = \RT(f,\tilde f) \]
by Theorem \ref{regularwecken}. 
\end{proof}

The same trace formula above also holds for non-regular maps, though the proof will require the following lemma. The proof of the lemma is a detailed algebraic calculation, which we postpone to the next section.
\begin{lem}\label{alglem}
Let $K$ be a complex with dimension $d$, let  $K'$ be a subdivision of $K$, and let $f,g:|K'| \to D_n(|K|)$ be two homotopic $n$-valued simplicial maps. Let $\tilde f = (\tilde f_1,\dots, \tilde f_n): |\tilde K'| \to F_n(|\tilde K|,\pi)$ be a lifting, and $\tilde g = (\tilde g_1,\dots, \tilde g_n)$ be the lifting of $g$ obtained by lifting the homotopy from $f$ to $g$ so that $\tilde f_k$ is homotopic to $\tilde g_k$ for each $k$. Then:
\begin{equation}
\sum_{q=0}^d \sum_{k=1}^n (-1)^q  \rho(\tr (\tilde f_k^q \circ \epsilon^q) \times \{k\}) = \sum_{q=0}^d \sum_{k=1}^n (-1)^q  \rho(\tr (\tilde g_k^q \circ \epsilon^q) \times \{k\}).
\end{equation}
\end{lem}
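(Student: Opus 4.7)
The plan is to imitate the classical chain-homotopy-invariance argument for the Reidemeister trace, with added bookkeeping for the coordinate index $k$ and the permutation twist coming from $\sigma$. First I would observe that since $\tilde f_k$ is homotopic to $\tilde g_k$ as single-valued simplicial maps for each $k$, there is a simplicial chain homotopy $D_k\colon C_q(\tilde K') \to C_{q+1}(\tilde K)$ satisfying $\tilde g_k^q - \tilde f_k^q = \partial D_k^q + D_k^{q-1}\partial$. I would fix once and for all the $\Z\pi$-module bases of $C_q(\tilde K)$ and $C_q(\tilde K')$ coming from the chosen lifts of simplices, so that each $\tilde f_k^q$, $\tilde g_k^q$, $D_k^q$ and $\epsilon^q$ is recorded as a matrix with entries in $\Z\pi$.

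Next I would perform the usual telescoping with the chain homotopy. Composing with $\epsilon^q$ and using that $\epsilon$ commutes with $\partial$, the difference
\[
\sum_{q=0}^d (-1)^q \bigl(\tr(\tilde g_k^q\circ \epsilon^q) - \tr(\tilde f_k^q\circ \epsilon^q)\bigr)
\]
reduces, after reindexing one of the two resulting sums by $q\mapsto q+1$, to a $\Z$-linear combination of expressions of the form $\tr(AB) - \tr(BA)$ for matrices $A, B$ with entries in $\Z\pi$. At the level of $\Z\pi$, each such expression is a sum of honest commutators $uv - vu$ with $u,v\in\pi$, since matrix trace is cyclic modulo entrywise commutators regardless of whether the underlying maps are $\Z\pi$-linear.

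The core of the proof, and what I expect to be the main obstacle, is then to verify that after multiplying by $\{k\}$ and summing over $k$, these commutators are annihilated by $\rho$. In the single-valued case the analogous vanishing is a short consequence of the relation $\alpha\sim\phi(\gamma)^{-1}\alpha\gamma$, but in the $n$-valued case the corresponding identity from \eqref{tceq} is twisted by $\sigma$ and involves the non-homomorphic functions $\phi_k$, so a commutator produced from coordinate $k$ is typically equivalent in $\Z\Reid(\tilde f_\#)$ to a term attached to coordinate $\sigma^{-1}_\gamma(k)$ rather than to $k$ itself. My strategy is to phrase the desired vanishing as a standalone algebraic claim: by choosing appropriate $\gamma\in\{u,v,u^{-1},v^{-1}\}$ in \eqref{tceq} and applying the product formula $\phi_k(xy) = \phi_k(x)\phi_{\sigma_x^{-1}(k)}(y)$ from the earlier proposition, one can match each commutator term attached to $k$ with its image attached to $\sigma^{-1}_\gamma(k)$; summing over $k$ the permutations act as a bijection and the resulting total collapses to zero. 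This compatibility between the trace cyclicity in $\Z\pi$ and the twisted $n$-valued Reidemeister equivalence is the technical heart of the calculation, and is exactly what I expect will require the "detailed algebraic calculation" alluded to in the statement.
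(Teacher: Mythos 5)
Your overall skeleton --- lift the homotopy to chain homotopies, telescope the alternating sum, and reduce everything to showing that the two composite traces agree in $\Z\Reid(\tilde f_\#)$ after summing over $k$ --- is the same as the paper's. But the step where you claim the difference reduces to $\tr(AB)-\tr(BA)$ for $\Z\pi$-matrices, and hence to ``honest commutators $uv-vu$,'' is wrong, and the reason it is wrong is the whole content of the lemma. The chain homotopies are not $\Z\pi$-linear: the collection $(D_1,\dots,D_n)$ obtained by lifting the homotopy satisfies the twisted equivariance $D_k(\gamma\tilde\sigma)=\phi_k(\gamma)D_{\sigma^{-1}_\gamma(k)}(\tilde\sigma)$, which both applies the non-homomorphism $\phi_k$ and changes the coordinate index. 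Consequently $\tr(D_k^{q-1}\circ\pd^q)$ is not the trace of a product of the recorded matrices; with the paper's notation it equals $\sum_{i,l}\phi_k(d^{il})\,h^{li}_{\sigma^{-1}_{d^{il}}(k)}$, while $\tr(\pd^q\circ D_k^{q-1})=\sum_{i,l}h_k^{li}d^{il}$. The difference is not a sum of commutators in $\Z\pi$ (already for $n=1$ it has the shape $\phi(d)h-hd$), and genuine commutators attached to a fixed coordinate would not be killed by $\rho$ anyway: \eqref{tceq} (via Lemma \ref{rclem}) gives $[(uv,\sigma_v^{-1}(k))]=[(\phi_k(v)u,k)]$, which relates neither $[(uv,k)]$ to $[(vu,k)]$ nor $\sum_k[(uv,k)]$ to $\sum_k[(vu,k)]$. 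So the route you set up would not get you to the vanishing you describe at the end.

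The missing ingredient is precisely the equivariance property above, which you never state and which cannot be produced one coordinate at a time from ``$\tilde f_k$ is homotopic to $\tilde g_k$''; it comes from lifting the homotopy to $F_n(\tilde X,\pi)$, where it is equivariant through $\tilde f_\#=\tilde g_\#$. Once you have it, the paper's calculation is forced and short: each trace term $\phi_k(d^{il})h^{li}_{\sigma^{-1}_{d^{il}}(k)}$ attached to coordinate $k$ is rewritten by Lemma \ref{rclem} as the class of $\bigl(h^{li}_{\sigma^{-1}_{d^{il}}(k)}d^{il},\ \sigma^{-1}_{d^{il}}(k)\bigr)$, and summing over $k$ and reindexing by the permutation $\sigma^{-1}_{d^{il}}$ yields exactly $\sum_j\rho\bigl(\tr(\pd^q\circ D_j^{q-1})\times\{j\}\bigr)$; the telescoping you describe then finishes the proof. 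Your closing paragraph gestures at this matching, but the coordinate shift has to enter already through the equivariance of the chain homotopy, not only through \eqref{tceq}; with that correction your argument becomes the paper's proof.
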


Theorem 4 of \cite{schi84a} is an $n$-valued simplicial approximation theorem, showing that any continuous $n$-valued map on polyhedra is is arbitrarily close to a regular $n$-valued simplicial map on some subdivision. This, together with homotopy invariance, allows us to drop the regularity assumption from Theorem \ref{regweckentracethm}

\begin{thm}\label{weckentracethm}
Let $K'$ be a subdivision of $K$, and let $f:|K'| \to D_n(|K|)$ be an $n$-valued simplicial map. Let $\tilde f = (\tilde f_1,\dots, \tilde f_n): |\tilde K'| \to F_n(|\tilde K|,\pi)$ be a lifting. Then:
\begin{equation}
\RT(f,\tilde f) = \sum_{q=0}^d \sum_{k=1}^n (-1)^q  \rho(\tr (\tilde f_k^q \circ \epsilon^q) \times \{k\}) \in \Z \Reid(\tilde f_\#) .
\end{equation}
\end{thm}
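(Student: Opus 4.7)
The plan is to reduce Theorem \ref{weckentracethm} to the regular case established in Theorem \ref{regweckentracethm} by combining it with the two homotopy-invariance results now at our disposal: Theorem \ref{htp}, which says $\RT$ itself is homotopy invariant, and Lemma \ref{alglem}, which says the right-hand side of the trace formula is homotopy invariant. These three tools together should sandwich the result.

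The first step is to invoke the $n$-valued simplicial approximation theorem (Theorem 4 of \cite{schi84a}, cited in the paragraph just before the statement) to produce a regular $n$-valued simplicial map $g$ admissibly homotopic to $f$. Passing to a common refinement of the triangulations if necessary, I would arrange $f$ and $g$ to be simplicial with respect to the same pair of subdivisions. Let $\tilde g = (\tilde g_1,\dots,\tilde g_n)$ be the lift of $g$ obtained by lifting the homotopy starting from $\tilde f$, exactly as in Theorem \ref{htp}.

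Once this is set up, the proof reduces to chaining three identities. Lemma \ref{alglem}, applied to the two homotopic simplicial maps $f$ and $g$ on the common subdivision, equates the trace-formula sum $\sum_{q,k}(-1)^q \rho\bigl(\tr(\tilde f_k^q \circ \epsilon^q)\times\{k\}\bigr)$ with the corresponding sum for $g$. Theorem \ref{regweckentracethm} then equates the $g$-side sum with $\RT(g,\tilde g)$, since $g$ was chosen regular. Finally, Theorem \ref{htp} gives $\RT(g,\tilde g) = \RT(f,\tilde f)$. Composing these three equalities yields the desired formula.

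The main obstacle is the triangulation-reconciliation step. Lemma \ref{alglem} insists that $f$ and $g$ live on the same subdivision, whereas Schirmer's approximation in general yields $g$ on a strictly finer subdivision of $K'$, and the restriction of a simplicial $f$ to such a refinement need not remain simplicial (new vertices of the refinement can lie in the interior of simplices of $K'$, and their $f$-images need not be vertices of $K$). The cleanest remedy is either to strengthen the approximation step—showing that a simplicial $f$ on $K'$ is already homotopic to a regular simplicial map on the same $K'$, by a small perturbation—or to prove a separate subdivision-invariance statement asserting that $\sum_{q,k}(-1)^q \rho\bigl(\tr(\tilde f_k^q \circ \epsilon^q)\times\{k\}\bigr)$ is unchanged under joint refinement of $K$ and $K'$ that preserves the simplicial structure of $f$. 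Either route is essentially a routine chain-level computation in $\Z\pi$ once set up correctly, but this is where the technical work lies; everything else is assembly from the named results.
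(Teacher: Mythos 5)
Your proposal is essentially identical to the paper's proof: it applies Schirmer's approximation theorem to obtain a regular simplicial map $g$ homotopic to $f$, then chains Theorem \ref{htp}, Theorem \ref{regweckentracethm}, and Lemma \ref{alglem} in exactly the same way. The triangulation-reconciliation subtlety you flag (that Lemma \ref{alglem} requires $f$ and $g$ to be simplicial on the same subdivision, while the approximation may live on a finer one) is a fair observation, but the paper's own proof does not address it either and simply applies Lemma \ref{alglem} directly.
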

\begin{proof}
By Schirmer's approximation theorem, there is a regular simplicial map $g$ homotopic to $f$, and by homotopy invariance and Theorem \ref{regweckentracethm} we know that 
\[ \RT(f,\tilde f) = \RT(g,\tilde g) = \sum_{q=0}^d \sum_{k=1}^n (-1)^q  \rho(\tr (\tilde g_i^q \circ \epsilon^q) \times \{k\}), \]
and by Lemma \ref{alglem} this equals $\sum_{q=0}^d \sum_{k=1}^n (-1)^q  \rho(\tr (\tilde f_k^q \circ \epsilon^q) \times \{k\})$ as desired.
\end{proof}

\begin{exl}\label{circleexl}
As an example of Theorem \ref{weckentracethm}, we will compute the Reidemeister trace for a $2$-valued map on the circle $S^1$, viewing $S^1$ as real numbers read modulo 1. In \cite{brow06}, Brown showed that all $n$-valued maps on the circle are homotopic to a ``linear'' map of some degree $d$, defined by 
\[ f(t) = \left\{ \frac dn t, \frac dn t + \frac1n, \dots, \frac dn t + \frac{n-1}n\right\}. \]
The graph of such a linear map consists of $n$ parallel lines of slope $d/n$. For example the $2$-valued linear map of degree $5$ has graph that looks like:
\tikzstyle{line1}=[thick]
\tikzstyle{line2}=[]
\[ 
\begin{tikzpicture}[scale=2]
\draw (0,0) rectangle (1,1);
\draw[line2] (0,1/2) -- (1/5, 1);
\draw[line1] (0,0) -- (2/5,1);
\draw[line2] (1/5,0) -- (3/5,1);
\draw[line1] (2/5,0) -- (4/5,1);
\draw[line2] (3/5,0) -- (1,1);
\draw[line1] (4/5,0) -- (1,1/2);
\end{tikzpicture}
 \]

To demonstrate the trace formula, we will compute the Reidemeister trace of the map above.  
Our first step must be to choose a triangulation $K$ and subdivision $K'$ such that we have a simplicial map $K' \to K$. There is not an obvious way to do this for the example above. Instead we will change the example pictured above by a small homotopy to obtain the $2$-valued map $f:S^1 \to D_2(S^1)$ of degree $5$ pictured in Figure \ref{circlefig}:

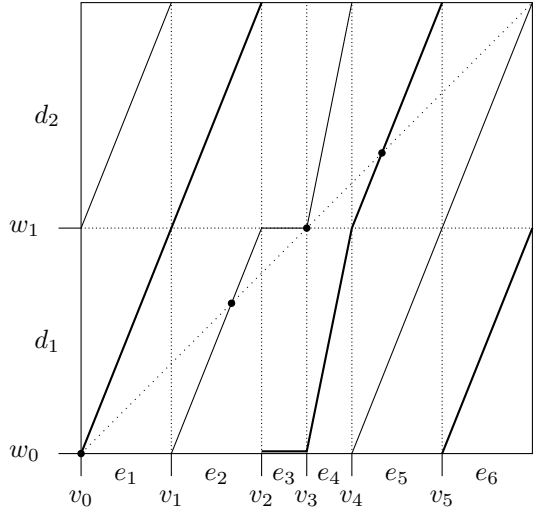
\begin{figure}
\newcommand{\tw}{.05}
\newcommand{\ylabf}{2.5}
\newcommand{\xlabf}{3}
\[ 
\begin{tikzpicture}[scale=6]
\draw (0,0) rectangle (1,1);
\draw[line2] (0,1/2) -- (1/5, 1);
\draw[line1] (0,0) -- (2/5,1);
\draw[line2] (1/5,0) -- (2/5,1/2) -- (1/2,1/2) -- (3/5,1);
\draw[line1] (2/5,.005) -- (1/2,.005) -- (3/5,1/2) -- (4/5,1);
\draw[line2] (3/5,0) -- (1,1);
\draw[line1] (4/5,0) -- (1,1/2);
\foreach \y in {0,1} {
  \draw (0,\y/2) -- (-\tw,\y/2);
  \node at (-\ylabf*\tw, \y/2) {$w_\y$};
  }
\foreach \y in {1,2} {
  \node at ({-(\ylabf-1)*\tw}, {(\y-1)/2+1/4}) {$d_\y$};
  }
\foreach \x in {0,...,2} {
 \draw (\x/5,0) -- (\x/5,-\tw);
 \node at (\x/5,{-(\xlabf-1)*\tw}) {$v_\x$};
 }
\draw (1/2,0) -- (1/2,-\tw);
\node at (1/2,{-(\xlabf-1)*\tw}) {$v_3$};
\foreach \x in {3,4} {
 \draw (\x/5,0) -- (\x/5,-\tw);
 }
\node at (3/5,{-(\xlabf-1)*\tw}) {$v_4$};
\node at (4/5,{-(\xlabf-1)*\tw}) {$v_5$};
\node at (.1,{-(\xlabf-2)*\tw}) {$e_1$};
\node at (.3,{-(\xlabf-2)*\tw}) {$e_2$};
\node at (.45,{-(\xlabf-2)*\tw}) {$e_3$};
\node at (.55,{-(\xlabf-2)*\tw}) {$e_4$};
\node at (.7,{-(\xlabf-2)*\tw}) {$e_5$};
\node at (.9,{-(\xlabf-2)*\tw}) {$e_6$};

\draw[ dotted] (0,0) -- (1,1);
\foreach \x in {0,1/3,1/2,2/3} {
  \filldraw (\x,\x) circle (.2pt);
  }
  
\foreach \x in {.2,.4,.5,.6,.8} {
 \draw[densely dotted] (\x,0) -- (\x,1);
 }
\draw[densely dotted] (0,.5) -- (1,.5);
\end{tikzpicture}
\]
\caption{The function $f$ of Example \ref{circleexl}. Thick lines are parts of the graph which lift to $\tilde f_1$, the thin lines lift to $\tilde f_2$. The marked points are the fixed points of $f$.\label{circlefig}}
\end{figure}

Since we are using the circle, our triangulations are uniquely specified by their vertex sets. In the codomain we are using vertex set $K_0 = \{w_0,w_1\} = \{0,1/2\}$, and in the domain we are using 
\[ K'_0 = \{v_0,\dots,v_5\} = \{0,1/5,2/5,1/2,3/5,4/5\}. \] 
We will denote the edges of $K$ as $K_1 = \{d_1,d_2\}$ as shown in the picture, and the edges of $K'$ as $K'_1=\{e_1,\dots,e_6\}$.

The universal cover of $S^1$ is $\R$, with projection map $p:\R \to S^1=[0,1)$ given by taking the non-integer part of a real number. As our lifting, we take $\tilde f: \R \to F_2(\R,\pi)$ given by $\tilde f = (\tilde f_1, \tilde f_2)$ where $\tilde f_1(0) = 0$ and $\tilde f_2(0)=1/2$. In the picture, the parts of the graph corresponding to $\tilde f_1$ are drawn with thick lines, and the parts corresponding to $\tilde f_2$ are lighter.

Lifting our triangulations $K$ and $K'$ gives triangulations $\tilde K$ and $\tilde K'$. As our $\Z\pi$-bases for $C_q(K)$ we will choose vertices $\tilde w_0 = 0$ and $\tilde w_1 = 1/2$ and edges $\tilde d_1 = [0,1/2]$ and $\tilde d_2 = [1/2,1]$. Similarly for $K'$ we make the natural choice of basis elements starting with $\tilde v_0 = 0$, $\tilde v_1 = 1/5$, etc.

Our fundamental group $\pi_1(S^1)$ is isomorphic to $\Z$, which we will write multiplicatively with generator $a$.

Now we can compute the various traces of \eqref{traceeq}. We have:
\begin{align*}
\tilde f^0_1\circ \epsilon^0(\tilde w_0) &= \tilde f^0_1(\tilde v_0) = \tilde w_0, \\
\tilde f^0_1\circ \epsilon^0(\tilde w_1) &= \tilde f_1^0(\tilde v_3) = a\tilde w_0.
\end{align*}
Thus we can express $\tilde f_1^0 \circ \epsilon_0$ as the matrix:
\[ \tilde f_1^0\circ \epsilon^0 = \begin{bmatrix} 1 & a \\ 0 & 0 \end{bmatrix} \]
and so $\tr(\tilde f_1^0\circ \ep^0) = 1$.

Similar computations give the matrix:
\[ \tilde f_2^0 \circ \ep^0 = \begin{bmatrix} 0 & 0 \\ 1 & a \end{bmatrix} \]
and so $\tr(\tilde f_2^0\circ \ep^0) = a$.

In dimension 1, we have:
\begin{align*}
\tilde f^1_1\circ \ep^1(\tilde d_1) &= \tilde f^1_1(\tilde e_1) + \tilde f^1_1(\tilde e_2) + \tilde f^1_1(\tilde e_3) = \tilde d_1 + \tilde d_2 + 0, \\
\tilde f^1_1\circ \ep^1(\tilde d_2) &= \tilde f^1_1(\tilde e_4) + \tilde f^1_1(\tilde e_5) + \tilde f^1_1(\tilde e_6) = a\tilde d_1 + a\tilde d_2 + a^2 \tilde d_1
\end{align*}
and so we have the matrix:
\[ \tilde f^1_1\circ \ep^1 = \begin{bmatrix} 1 & a+a^2 \\ 1 & a \end{bmatrix} \]
and so $\tr(\tilde f^1_1\circ \ep^1) = 1 + a$.

Similar computations show 
\[ \tilde f_2^1\circ \ep^1 = \begin{bmatrix} a & a^2 \\ 1 & a+a^2 \end{bmatrix} \]
and so $\tr(\tilde f_2^1\circ \ep^1) = 2a + a^2$.

Thus \eqref{traceeq} gives:
\begin{align*} 
\RT(f,\tilde f) &= [(1,1)] + [(a,2)] - [(1,1)] - [(a,1)] - 2[(a,2)] - [(a^2,2)] \\
&= -[(a,2)] - [(a,1)] - [(a^2,2)].
\end{align*}

Naturally we would like to know if the terms in the sum above can be combined or simplified any further. This will require some algebra to determine the structure of the set $\Reid(\tilde f_\#)$. In the case of the circle, this work appears in Section 5 of \cite{bdds20}, where it is shown that, for the linear $n$-valued map of degree $d$ with $n\neq d$, we have $[(a^i,k)] = [(a^l,j)]$ if and only if $di+k = dl+j \mod |d-n|$. In our example this criterion will demonstrate that all three terms above are distinct, and it will also allow us to ``simplify'' $[(a,2)]=[(1,2)]$ and $[(a^2,2)] = [(1,1)]$. Thus we conclude:
\[ \RT(f,\tilde f) = -[(1,1)] - [(1,2)] - [(a,1)], \]
and so in particular we have $L(f)=-3$ and $N(f)=3$. (In fact Brown has shown in \cite[Theorem 2]{brow07} that $N(f)=|L(f)|$ for any $n$-valued map on the circle.)

Rather than using the trace formula, we could directly apply Theorem \ref{regularwecken} to the example. Inspection of the graph in Figure \ref{circlefig} shows 4 fixed points, with $\Fix(f) = \{0,1/3,1/2,2/3\}$. The fixed point at 1/2 can be removed by a small homotopy and so it has fixed point index 0. The others each have fixed point index $-1$, and we observe that:
\[ \tilde f_1(0)=0, \qquad \tilde f_2(1/3) = 1/3, \qquad \tilde f_1(2/3) = a(2/3), \]
and so the fixed point classes are as follows:
\[ 0 \in p\Fix(\tilde f_1(0)) \qquad
1/3 \in p\Fix(\tilde f_2(1/3)) \qquad
2/3 \in p\Fix(a^{-1}\tilde f_1(2/3)) \]
Thus the Reidemeister class associated to the fixed point 0 is $[(1,1)]$, the Reidemeister class of $1/3$ is $[(1,2)]$, and the Reidemeister class of $2/3$ is $[(a,1)]$. As expected, Theorem \ref{regularwecken} gives:
\[ \RT(f,\tilde f) =-[(1,1)] - [(1,2)] - [(a,1)]. \]
\end{exl}

Following Brown's work we can compute the Reidemeister trace for any circle map. As in Example \ref{circleexl} we represent $\pi_1(S^1)$ multiplicatively as the group generated by $a\in \pi_1(S^1)$. 
\begin{thm}
Let $f:S^1\to D_n(S^1)$ be the $n$-valued map of degree $d$, and let $\tilde f$ be a lift. If $d=n$ then $\RT(f,\tilde f) = 0$. If $d>n$, then let 
\[ S = \{ (a^i,k) \in \pi\times \{1,\dots,n\} \mid 0\le -in-k+1 < d-n \}, \]
and we have
\[ \RT(f,\tilde f) = \sum_{(a^i,k)\in S} -[(a^i,k)]. \]
If $d<n$ then similarly let
\[ T = \{ (a^i,k) \in  \pi\times \{1,\dots,n\} \mid 0\ge -in-k+1 > d-n \}, \]
and we have
\[ \RT(f,\tilde f) = \sum_{(a^i,k)\in T} [(a^i,k)]. \]
\end{thm}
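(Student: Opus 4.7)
The plan is to apply Theorem \ref{regularwecken} to an explicit enumeration of the fixed-point set of the linear $n$-valued map $f$. For $d = n$ the fixed-point set is all of $S^1$, but a small homotopy removes every fixed point; for $d \neq n$ the fixed points are isolated and transverse, and their Reidemeister classes can be read off directly from the natural lift.

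For the case $d = n$, I would use the homotopy $f_s(t) = \{t + s + j/n : j = 0, \dots, n-1\}$, which is a well-defined $n$-valued map for all $s$ since its $n$ values differ by the fixed constants $0, 1/n, \dots, (n-1)/n \pmod 1$. At $s = 1/(2n)$ the condition $t \in f_s(t)$ would force $1/(2n) + j/n \in \Z$ for some $j \in \{0, \dots, n-1\}$, which is impossible. Hence $f_{1/(2n)}$ has no fixed points, and Theorem \ref{htp} gives $\RT(f, \tilde f) = 0$.

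For $d \neq n$, take the natural lift $\tilde f_k(t) = dt/n + (k-1)/n$ and adopt the convention that the generator $a$ acts on $\tilde X = \R$ by $\tilde x \mapsto \tilde x - 1$, so that $a^i \cdot \tilde x = \tilde x - i$; pinning this down upfront is the main delicate point, since the opposite convention would flip the sign of $-in$ throughout. Rearranging the fixed-point equation $\tilde f_k(\tilde t) = a^i \cdot \tilde t$ then yields $t = (-in - k + 1)/(d - n)$, and demanding $t \in [0, 1)$ picks out exactly the pairs $(a^i, k) \in S$ (for $d > n$) or $(a^i, k) \in T$ (for $d < n$); every fixed point of $f$ arises this way. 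By construction the Reidemeister class of the fixed point at $t_{i,k} = (-in - k + 1)/(d - n)$ is $[(a^i, k)]$. Each branch $f_k$ has slope $d/n$, so the one-dimensional formula $\ind(f_k, t_{i,k}) = \operatorname{sgn}(1 - d/n)$, combined with Schirmer's splitting definition of the $n$-valued index, yields index $-1$ when $d > n$ and $+1$ when $d < n$. Theorem \ref{regularwecken} then assembles these contributions into the claimed formulas.
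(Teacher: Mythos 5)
Your proposal is correct and follows essentially the same route as the paper: reduce to the linear map (removing all fixed points by a small shift when $d=n$), solve the lifted fixed-point equation $\tilde f_k(\tilde t)=a^i\tilde t$ to enumerate $\Fix(f)$ and read off the classes $[(a^i,k)]$, compute each local index from the slope $d/n$, and assemble via Theorem \ref{regularwecken}. Your explicit fixing of the deck-transformation convention for $a$ is a welcome extra precision (the paper's own computation is slightly loose on this sign), but it does not change the substance of the argument.
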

\begin{proof}
By Theorem 3.1 of \cite{brow06}, we may change by homotopy so that $f$ is the linear $n$-valued map of degree $d$, with lifting $\tilde f = (\tilde f_1,\dots,\tilde f_n)$ given by:
\[ \tilde f_k(t) = \frac dn t + \frac{k-1}n. \]
If $n=d$, then changing $f$ by homotopy to $f(x)+\epsilon$ for some small $\epsilon>0$ will remove all fixed points. Thus when $n=d$ by Theorem \ref{regularwecken} we will have $\RT(f,\tilde f) = 0$.

Now we consider the case where $n\neq d$. For any $i\in \Z$, we will compute $\Fix(a^{-i} \tilde f_k)$. We have $t\in \Fix(a^{-i} \tilde f_k)$ if and only if $\tilde f_k(t) - i = t$, which is equivalent to:
\[ t = \frac dn t + \frac{k-1}n - i,\]
and solving for $t$ gives:
\[ t = \frac{-in-l+1}{d-n}. \]
Thus we have $x\in p\Fix(a^{-i}\tilde f_k)$ if and only if $0\le x < 1$ and $x = \frac{-in-k+1}{d-n}$.

We will finish the proof in the case where $d>n$. The case where $d<n$ is similar. 

Since $d>n$, the graph of $f$ will consist of line segments having slope greater than 1, and so the index of each isolated fixed point will be $-1$. We will have $d-n>0$ and so we have a fixed point $x$ for every choice of $i$ and $k$ which makes $0 \le -in-k+1 < d$, and such a fixed point will be an element of $p\Fix(a^{-i}\tilde f_k)$, and thus $[x] = [(a^i,k)]$.
\end{proof}

Note that if $f$ is the $2$-valued circle map with degree 5, in the theorem above we have $S = \{(1,1),(a,1),(a,2) \}$, and thus
\[ \RT(f,\tilde f) = -[(1,1)] - [(a,1)] - [(a,2)], \]
agreeing with the results of Example \ref{circleexl}. 

\section{Proof of Lemma \ref{alglem}}\label{lemsec}
In this section we will prove Lemma \ref{alglem}, the homotopy-invariance of the trace formula for $\RT(f,\tilde f)$. As a warm-up, we will outline the proof used for the single-valued theory, following some exposition from \cite{geog02} (see also \cite{huss82} which follows these arguments in more generality). 

In the single-valued setting, we have the chain complex $C^q(\tilde X)$ considered as $\Z\pi$-modules with some fixed chosen bases, the boundary map $\pd$ and $\phi$-maps $f^q:C^q(\tilde X)\to C^q(\tilde X)$ satisfying $f^q(g\tilde \sigma) = \phi(g)f^q(\tilde \sigma)$ for $g\in \pi$ where $\phi:\pi\to \pi$ is the homomorphism induced by $f$ on the fundamental group of $X$. (To simplify the notation we write the map $\tilde f^q \circ \epsilon^q$ of Lemma \ref{alglem} simply as $f^q$.)

We also assume another $\phi$-map $g$, which is homotopic to $f$. This homotopy induces a chain homotopy $H^q:C^q(\tilde X) \to C^{q+1}(\tilde X)$ satisfying 
\begin{equation}\label{chainhtp} 
f^q - g^q = \pd^{q+1} \circ H^q + H^{q-1}\circ \pd^q.
\end{equation}
This $H^q$ also satisfies $H^q(g\tilde \sigma) = \phi(g)H^q(\tilde \sigma)$. 

Our goal is to prove that:
\[ \sum_q (-1)^q \rho(\tr f^q) = \sum_q (-1)^q \rho(\tr g^q). \]

We do this by showing that the right side of \eqref{chainhtp} equals zero when we compute the alternating sum of Reidemeister classes of traces. 

Let $(v_j)$ and $(w_j)$ be the chosen bases for $C^{q}(\tilde X)$ and $C^{q-1}(\tilde X)$ respectively. 
Let the entries of the matrix for $\pd^{q}$ be given by $[\pd^{q}]^{ij} = d^{ij}$, where
\[ \pd^{q}(v_j) = \sum_{l}d^{lj}w_l, \]
and let $[H^{q-1}]^{ij} = h^{ij}$, where
\[ H^{q-1}(w_j) = \sum_l h^{lj}v_l. \]
Then to compute the matrix for $H^{q-1} \circ \pd^{q}$, we have:
\begin{align*} 
H^{q-1} (\pd^{q} (v_j)) &= H^{q-1} \left(\sum_l d^{lj} w_l\right) = \sum_l \phi(d^{lj}) H^{q-1}(w_l) \\
&= \sum_l \sum_m \phi(d^{lj}) h^{ml}w_m 
\end{align*}
and so we have $[H^{q-1} \circ \pd^{q}]^{ij} = \sum_l \phi(d^{lj})h^{il}$, and thus 
\[ \tr(H^{q-1} \circ \pd^{q}) = \sum_i \sum_l \phi(d^{li})h^{il}= \sum_i\sum_l \phi(d^{il})h^{li}, \]
where the second equality is by the commutativity of the classical trace.

Since $\pd^{q-1}$ satisfies $\pd^{q-1}(g\sigma) = g\pd^{q-1}(\sigma)$, a similiar computation gives 
\[ \tr (\pd^{q}\circ H^{q-1})  = \sum_i \sum_l h^{il}d^{li}. \] 
Since $\phi(x^{-1})\phi(x)yx$, we have $\rho(\phi(x)y) = \rho(yx)$, and so
\begin{equation}\label{warmupeq}  
\rho(\tr(H^{q-1}\circ \pd^q)) = \rho(\tr (\pd^{q}\circ H^{q-1})) . 
\end{equation}

Thus \eqref{chainhtp} gives:
\begin{align*} 
\RT(f) - \RT(g) &= \sum_q (-1)^q \rho(\tr(\pd^{q+1} \circ H^q + H^{q-1}\circ \pd^q)) \\
&= \sum_q (-1)^q \rho(\tr(\pd^{q+1} \circ H^q)) + (-1)^q \rho(\tr(H^{q-1}\circ \pd^q))
\end{align*}
and the sum will telescope to 0, so $\RT(f)=\RT(g)$.

Our proof of Lemma \ref{alglem} will repeat the arguments above in the $n$-valued setting. The argument is the same, but the notation will be more cumbersome.

First we have a simple lemma about Reidemeister classes in the $n$-valued setting.
\begin{lem}\label{rclem}
Let $\tilde f_\# = (\phi_1,\dots,\phi_n;\sigma)$, and let $x,y\in \pi$ and $k\in \{1,\dots,n\}$. Then:
\[ [(yx,\sigma_x^{-1}(k))]_{\tilde f_\#} = [(\phi_k(x)y,k)]_{\tilde f_\#} \]
\end{lem}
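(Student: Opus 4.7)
The plan is to produce a direct verification by choosing the right twisted conjugator $\gamma$ in the Reidemeister relation \eqref{tceq}. Recall that relation says $(\alpha,k_1) \sim (\beta,j)$ exactly when there is some $\gamma\in\pi$ with $\sigma_\gamma(k_1)=j$ and $\alpha = \phi_j(\gamma)^{-1}\beta\gamma$. So to prove the lemma, I set $\alpha = yx$, $k_1 = \sigma_x^{-1}(k)$, $\beta = \phi_k(x)y$, $j=k$, and look for a $\gamma$ satisfying both conditions.

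The natural guess is $\gamma = x$ itself, since the mismatched indices on the two sides already differ by $\sigma_x^{-1}$. First I would check the index condition: $\sigma_x(k_1) = \sigma_x(\sigma_x^{-1}(k)) = k = j$, which is immediate. Next I would check the group element condition: $\phi_k(x)^{-1}\beta\gamma = \phi_k(x)^{-1}\phi_k(x)y\cdot x = yx = \alpha$, again immediate. These two checks exhibit the required $\gamma$, so $(yx, \sigma_x^{-1}(k)) \sim (\phi_k(x)y, k)$, which is the claim.

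There is essentially no obstacle here — the lemma is really just the observation that the defining Reidemeister relation, read with $\gamma = x$, expresses exactly this identity. The only subtlety is bookkeeping: one must be careful that the index rule $\sigma_\gamma(k_1)=j$ in \eqref{tceq} matches the convention $\sigma_x^{-1}(k)$ appearing in the statement, and that the $\phi_j$ appearing in the twisted conjugation carries the index $j=k$ (not $k_1$), so that the cancellation $\phi_k(x)^{-1}\phi_k(x)=1$ actually happens. Once these index conventions are lined up, the proof is a two-line computation, and it will serve as the algebraic identity that drives the trace manipulation in the proof of Lemma \ref{alglem}.
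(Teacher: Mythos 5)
Your proof is correct and is exactly the paper's argument: the paper's own proof simply invokes \eqref{tceq} with $\gamma = x$, which is precisely the verification you carry out. Your two checks (the index condition and the cancellation $\phi_k(x)^{-1}\phi_k(x)y\,x = yx$) just make that one-line claim explicit.
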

\begin{proof}
This follows immediately from \eqref{tceq}, using $\gamma = x$.
\end{proof}

Now we are ready to prove Lemma \ref{alglem}. 
\begin{proof}[Proof of Lemma \ref{alglem}]

The homotopy from $f$ to $g$ induces $n$ chain homotopies $(H_1,\dots,H_n)$ where $H^q_k$ is a chain homotopy from $f^q_k$ to $g^q_k$. Repeating the argument from the warm-up, it will suffice to show (compare \eqref{warmupeq}) that 
\begin{equation}\label{treq}
\sum_{k=1}^n \rho(\tr(H_k^{q-1}\circ \pd^q)\times \{k\}) = \sum_{k=1}^n \rho(\tr (\pd^{q}\circ H_k^{q-1})\times \{k\}).
\end{equation}

Let $(v_j)$ and $(w_j)$ be the chosen bases for $C^{q}(\tilde X)$ and $C^{q-1}(\tilde X)$ respectively, and let the matrix for $\pd^{q}$ be given by $[\pd^{q}]^{ij} = d^{ij}$. Let $[H^{q-1}_k]^{ij} = h_k^{ij}$. Then following the warm-up we have:
\begin{align*} 
H_k^{q-1} (\pd^{q} (v_j)) &= H_k^{q-1} \left(\sum_l d^{lj} w_l\right) = \sum_l \phi_k(d^{lj}) H_{\sigma_{d^{lj}}^{-1}(k)}^{q-1}(w_l) \\
&= \sum_l \sum_m \phi_k(d^{lj}) h_{\sigma_{d^{lj}}^{-1}(k)}^{ml}w_m 
\end{align*}
and so
\begin{align*} 
\tr(H_k^{q-1} \circ \pd^{q}) &= \sum_i \sum_l \phi_k(d^{li}) h_{\sigma_{d^{li}}^{-1}(k)}^{il} = \sum_i \sum_l \phi_k(d^{il}) h_{\sigma_{d^{il}}^{-1}(k)}^{li}.
\end{align*}
Then we have
\begin{align*} 
\rho(\tr(H_k^{q-1} \circ \pd^{q})\times \{k\})&= \sum_i \sum_l [(\phi_k(d^{il}) h_{\sigma_{d^{il}}^{-1}(k)}^{li}, k)] \\
&= \sum_i \sum_l [(h_{\sigma_{d^{il}}^{-1}(k)}^{li} d^{il}, \sigma_{d^{il}}^{-1}(k))]
\end{align*}
where the last equality is by Lemma \ref{rclem}.

Again following the warm-up the situation for $\pd^{q}\circ H_k^{q-1}$ is easier, and we will have:
\[ \tr(\pd^{q}\circ H_k^{q-1}) = \sum_i \sum_l h_k^{li} d^{il}, \]
and so
\[ \rho(\tr(\pd^{q}\circ H_k^{q-1}) \times \{k\}) = \sum_i \sum_l [(h_k^{li} d^{il},k)]. \]

Summarizing the above, and summing over $k$, we have:
\begin{align*}
\sum_{k=1}^n \rho(\tr(H_k^{q-1} \circ \pd^{q})\times \{k\}) &= \sum_i \sum_l \sum_{k=1}^n [(h_{\sigma_{d^{il}}^{-1}(k)}^{li} d^{il}, \sigma_{d^{il}}^{-1}(k))] \\
&= \sum_i \sum_l \sum_{j=1}^n [(h_j^{li} d^{il}, j)] 
= \sum_{j=1}^n \rho(\tr(\pd^{q}\circ H_j^{q-1}) \times \{j\})
\end{align*}
which establishes \eqref{treq}. In the middle step we have reindexed the sum since $\sigma^{-1}_{d^{il}}$ is a permutation.
\end{proof}

\section{Properties of the Reidemeister trace}\label{propertiessec}
Fixed point invariants typically are well-behaved with respect to splittings of $n$-valued maps. We will show that when $f$ splits as $f = (f_1, \dots, f_n)$, then the Reidemeister trace of $f$ equals the sum of the Reidemeister traces of the single-valued maps $f_k$. 

For a single valued map $f_k$ with induced homomorphism $\phi_k$, let $\Reid_1(\tilde f_{k\#})$ be the set of (single-valued) Reidemeister classes of $f_k$, given by elements of $\pi$ modulo the equivalence: $[\alpha] = [\beta]$ if and only $\alpha = \phi_k(\gamma)^{-1}\beta\gamma$ for some $\gamma\in \pi$.

When $f$ splits, we will have $\tilde f_\# = (\phi_1,\dots,\phi_n;\id)$, where $\id$ is the identity permutation. In this case it is easy to verify that the map $\iota_k: \Reid_1(\tilde f_{k\#}) \to \Reid(\tilde f_\#)$ given by $\iota_k([\alpha]) = [(\alpha,k)]$ is well defined, and that we have a disjoint union: 
\[ \Reid(\tilde f_\#) = \iota_1(\Reid_1(\tilde f_{1\#})) \sqcup \dots \sqcup \iota_n(\Reid_1(\tilde f_{n\#})). \] 

\begin{thm}
Let $f:X\to D_n(X)$, and assume that $f$ splits into single-valued maps as $f = (f_1,\dots,f_n)$. Then, choosing liftings $\tilde f_k:\tilde X \to \tilde X$ and letting $\tilde f = (\tilde f_1,\dots,\tilde f_n)$, we have:
\[ \RT(f,\tilde f) = \sum_{k=1}^n \iota_k(\RT(f_k, \tilde f_k)), \]
where the terms on the right side denote the Reidemeister trace of a single-valued map.
\end{thm}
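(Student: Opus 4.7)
The plan is to unpack the definition of $\RT(f,\tilde f)$ term by term and show that each term indexed by a class of the form $\iota_k([\alpha])$ matches the corresponding term of $\RT(f_k,\tilde f_k)$. First I would observe that because $f$ splits, the induced homomorphism is $\tilde f_\# = (\phi_1,\dots,\phi_n;\id)$ with each $\phi_k$ a genuine homomorphism: when $\sigma\equiv\id$ the product formula from the proposition early in the paper collapses to ordinary multiplicativity, and $\phi_k$ is precisely the classical induced homomorphism $(f_k)_\# : \pi \to \pi$ of the single-valued lift $\tilde f_k$. Using the disjoint union decomposition $\Reid(\tilde f_\#) = \bigsqcup_{k=1}^n \iota_k(\Reid_1(\tilde f_{k\#}))$ recalled just before the theorem, it therefore suffices to prove that for each $k$ and each single-valued class $[\alpha]\in \Reid_1(\tilde f_{k\#})$, the coefficient of $[(\alpha,k)]$ in $\RT(f,\tilde f)$ equals the coefficient of $[\alpha]$ in $\RT(f_k,\tilde f_k)$.

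Next I would match up the fixed point classes geometrically. Since the $n$ values $f_1(x),\dots,f_n(x)$ are always pairwise distinct, no point of $X$ can be fixed by more than one $f_k$, so $\Fix(f) = \bigsqcup_{k=1}^n \Fix(f_k)$. Under the identification above, the $n$-valued fixed point class $p\Fix(\alpha^{-1}\tilde f_k)$ appearing in Definition \ref{RTdef} is literally the single-valued fixed point class of $f_k$ corresponding to $[\alpha]$. Consequently one can choose the neighborhood $U_{[(\alpha,k)]}$ in Definition \ref{RTdef} so that it contains only fixed points of $f_k$ (and no fixed points of any other $f_j$), and so that it simultaneously serves as the neighborhood used to compute the single-valued index of $f_k$ on the class $[\alpha]$.

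Finally I would invoke Schirmer's splitting definition of the $n$-valued index, recalled in the introduction: on an open set $U$ where $f$ splits as $\{f_1,\dots,f_n\}$ and all fixed points of $f$ in $U$ are fixed points of a single $f_k$, one has $\ind(f,U) = \ind(f_k,U)$. Applied to $U = U_{[(\alpha,k)]}$ this yields
\[
\ind(f, U_{[(\alpha,k)]}) \;=\; \ind(f_k, U_{[(\alpha,k)]}),
\]
and the right-hand side is precisely the coefficient of $[\alpha]$ in $\RT(f_k,\tilde f_k)$. Summing over $k$ and over all single-valued Reidemeister classes then gives the desired equality.

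I do not expect a serious obstacle here; the argument is essentially bookkeeping once the three compatibilities (of Reidemeister sets, fixed point sets, and fixed point indices) are in place. The only subtlety worth double-checking is that the $\alpha^{-1}\tilde f_k$ convention used in our $n$-valued Reidemeister class $[(\alpha,k)]$ matches the standard single-valued convention for representing classes of $\tilde f_k$, which is the case since both conventions were fixed in the same way in the discussion preceding Theorem \ref{weckentracethm}.
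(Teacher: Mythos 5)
Your proposal is correct and follows essentially the same route as the paper: decompose $\Reid(\tilde f_\#)$ as the disjoint union $\bigsqcup_k \iota_k(\Reid_1(\tilde f_{k\#}))$, regroup the defining sum by $k$, and use the fact that every fixed point of $f$ in $U_{\iota_k([\alpha])}$ is a fixed point of $f_k$ (via Schirmer's splitting definition of the index) to identify $\ind(f,U_{\iota_k([\alpha])})$ with $\ind(f_k,U_{\iota_k([\alpha])})$. The extra observations you make about $\Fix(f)=\bigsqcup_k\Fix(f_k)$ and the matching of conventions are fine but not needed beyond what the paper already records.
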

\begin{proof}
By the disjoint union above, we have:
\begin{align*}
\RT(f,\tilde f) &= \sum_{[(\alpha,k)] \in \Reid(\tilde f_\#)} \ind(f,U_{[(\alpha,k)]})[(\alpha,k)] \\
&= \sum_{k=1}^n \sum_{[\alpha]\in \Reid_1(\tilde f_{k\#})} \ind(f,U_{\iota_k([\alpha])}) \iota_k([\alpha]) \\
&= \sum_{k=1}^n \iota_k\left( \sum_{[\alpha]\in \Reid_1(\tilde f_{k\#})} \ind(f,U_{\iota_k([\alpha])}) [\alpha] \right)
\end{align*}
Since every fixed point of $f$ on $U_{\iota_k([\alpha])}$ is a fixed point of $f_k$, we have $\ind(f,U_{\iota_k([\alpha])}) = \ind(f_k,U_{\iota_k([\alpha])})$, and thus
\[ \RT(f,\tilde f) = 
%\sum_{k=1}^n \sum_{[\alpha]\in \Reid_1(\tilde f_{k\#})} \ind(f_k,U_{\iota_k([\alpha])}) \iota_k([\alpha]) 
\sum_{k=1}^n \iota_k\left( \sum_{[\alpha]\in \Reid_1(\tilde f_{k\#})} \ind(f,U_{\iota_k([\alpha])}) [\alpha] \right)
= \sum_{k=1}^n \iota_k(\RT(f_k,\tilde f_k)) \]
as desired.
\end{proof}

Next we consider the role of the choice of lifting $\tilde f$ in $\RT(f,\tilde f)$. We show that alternative choices of the lifting will change the value of $\RT(f,\tilde f)$ in predictable ways. Given one lifting $\tilde f$, any other choice of lifting can be written as $\Phi\tilde f$ for some $\Phi \in \pi^n \rtimes \Sigma_n$. 

Work in \cite[page 7--8]{bdds20} describes the form of the induced homomorphism $(\Phi f)_\#$ in terms of $\Phi$ and $\tilde f_\#$, showing that:
\[ (\Phi f)_\#(\gamma) = \Phi \cdot \tilde f_\#(\gamma) \cdot \Phi^{-1}. \]
Let $\tau_\Phi$ denote this conjugation by $\Phi$, so $(\Phi f)_\# = \tau_\Phi \tilde f_\#$. 

%We can demonstrate very specifically the relationship between $\RT(f,\tilde f)$ and $\RT(f,\Phi\tilde f)$ in the case where the permutation part of $\Phi$ is trivial.

\begin{prop}\label{muprop}
Let $f:X\to D_n(X)$ be an $n$-valued map and let $\Phi = (\delta_1,\dots,\delta_n;\epsilon) \in \pi^n \rtimes \Sigma_n$. Define $\mu_\Phi: \pi \times \{1,\dots,n\} \to \pi \times \{1,\dots,n\}$ by:
\[\mu_\Phi(\alpha,k) = (\delta_{k}^{-1}\alpha,\epsilon^{-1}(k)) \]
Then $\mu_\Phi$ induces a well-defined bijection $\mu_\Phi: \Reid(\tau_\Phi \tilde f_\#) \to \Reid(\tilde f_\#)$. 
\end{prop}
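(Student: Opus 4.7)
The plan is to unpack the conjugation $\tau_\Phi$ in coordinates, then directly verify that $\mu_\Phi$ sends the Reidemeister relation for $\tau_\Phi \tilde f_\#$ to the Reidemeister relation for $\tilde f_\#$, and finally check bijectivity by exhibiting $\mu_{\Phi^{-1}}$ as a two-sided inverse.

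First I would write $\tilde f_\# = (\phi_1,\dots,\phi_n;\sigma)$ and compute $\tau_\Phi \tilde f_\#(\gamma) = \Phi \cdot \tilde f_\#(\gamma) \cdot \Phi^{-1}$ using the formulas for the product and inverse in $\pi^n \rtimes \Sigma_n$. A short calculation gives $\tau_\Phi \tilde f_\# = (\phi'_1,\dots,\phi'_n;\sigma')$ with
\[ \sigma'_\gamma = \epsilon \, \sigma_\gamma \, \epsilon^{-1}, \qquad \phi'_k(\gamma) = \delta_k \, \phi_{\epsilon^{-1}(k)}(\gamma)\, \delta^{-1}_{\epsilon\,\sigma_\gamma^{-1} \epsilon^{-1}(k)}. \]

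Next I would verify well-definedness. Suppose $(\alpha,k) \sim_{\tau_\Phi \tilde f_\#} (\beta,j)$, so there exists $\gamma\in\pi$ with $\sigma'_\gamma(k)=j$ and $\alpha = \phi'_j(\gamma)^{-1}\beta\gamma$. The first condition rewrites as $\sigma_\gamma(\epsilon^{-1}(k)) = \epsilon^{-1}(j)$, and in particular $\epsilon\,\sigma_\gamma^{-1}\epsilon^{-1}(j) = k$, which collapses $\delta_{\epsilon\sigma_\gamma^{-1}\epsilon^{-1}(j)}$ to $\delta_k$ inside the formula for $\phi'_j(\gamma)^{-1}$. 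Substituting, the second condition becomes
\[ \alpha = \delta_k\, \phi_{\epsilon^{-1}(j)}(\gamma)^{-1}\, \delta_j^{-1}\, \beta\, \gamma, \]
which, multiplying by $\delta_k^{-1}$ on the left, gives exactly
\[ \delta_k^{-1}\alpha = \phi_{\epsilon^{-1}(j)}(\gamma)^{-1}\, (\delta_j^{-1}\beta)\, \gamma. \]
This, combined with $\sigma_\gamma(\epsilon^{-1}(k))=\epsilon^{-1}(j)$, is precisely the statement that $(\delta_k^{-1}\alpha,\epsilon^{-1}(k)) \sim_{\tilde f_\#} (\delta_j^{-1}\beta,\epsilon^{-1}(j))$, i.e.\ $\mu_\Phi(\alpha,k) \sim_{\tilde f_\#} \mu_\Phi(\beta,j)$, so $\mu_\Phi$ descends to a well-defined map on Reidemeister classes.

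For bijectivity, I would compute $\Phi^{-1} = (\delta^{-1}_{\epsilon(1)},\dots,\delta^{-1}_{\epsilon(n)};\epsilon^{-1})$ from the formula stated earlier in the paper, so that
\[ \mu_{\Phi^{-1}}(\alpha,k) = (\delta_{\epsilon(k)}\alpha, \epsilon(k)). \]
A direct substitution then gives $\mu_{\Phi^{-1}}(\mu_\Phi(\alpha,k)) = (\delta_{\epsilon(\epsilon^{-1}(k))}\delta_k^{-1}\alpha, \epsilon(\epsilon^{-1}(k))) = (\alpha,k)$, and symmetrically in the other order. Since $\tau_{\Phi^{-1}} = \tau_\Phi^{-1}$, the well-definedness argument applies in both directions (applied once to $\Phi$ and once to $\Phi^{-1}$), so $\mu_\Phi$ and $\mu_{\Phi^{-1}}$ are mutually inverse bijections between $\Reid(\tau_\Phi \tilde f_\#)$ and $\Reid(\tilde f_\#)$.

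The main obstacle is purely bookkeeping: keeping the indices straight in the coordinate expression for $\tau_\Phi \tilde f_\#$, where the key simplification is noticing that the condition $\sigma'_\gamma(k)=j$ forces $\epsilon\,\sigma_\gamma^{-1}\epsilon^{-1}(j)=k$, which is exactly what is needed to make the $\delta_k$ on the left of $\mu_\Phi(\alpha,k)$ cancel cleanly.
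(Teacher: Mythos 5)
Your proposal is correct and follows essentially the same route as the paper: compute $\tau_\Phi \tilde f_\#$ in coordinates, use the condition $\sigma'_\gamma(k)=j$ to collapse the $\delta$-index and verify that the Reidemeister relation \eqref{tceq} is preserved, then invert via $\mu_{\Phi^{-1}}$. The only cosmetic difference is that you check $\mu_{\Phi^{-1}}$ is a two-sided inverse directly on representatives, whereas the paper cites the general identity $\mu_A\circ\mu_B=\mu_{BA}$; these amount to the same calculation (and your index $\epsilon\,\sigma_\gamma^{-1}\epsilon^{-1}(k)$ is in fact the correct one).
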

\begin{proof}
Let $[(\alpha,k)]_{\tau_\Phi \tilde f_\#} = [(\beta,j)]_{\tau_\Phi \tilde f_\#} \in \Reid(\tau_\Phi \tilde f_\#)$, and we will show that: 
\[ [(\delta_k^{-1}\alpha, \epsilon^{-1}(k))]_{\tilde f_\#} = [(\delta_j^{-1}\beta,\epsilon^{-1}(j))]_{\tilde f_\#}. \] That is, we must show that there is some $\gamma\in G$ with: 
\begin{equation}\label{mueq}
\sigma_\gamma(\ep^{-1}(k)) = \ep^{-1}(j) \text{ and } \delta_k^{-1} \alpha  = \phi_{\ep^{-1}(j)}(\gamma^{-1}) \delta_j^{-1} \beta \gamma
\end{equation}
where $\tilde f_\#(\gamma) = (\phi_1(\gamma),\dots,\phi_n(\gamma);\sigma_\gamma) \in \pi^n\rtimes \{1,\dots,n\}$.

In coordinates, the homomorphism $\tau_\Phi \tilde f_\#$ can be computed as:
\begin{align*}
\tau_\Phi \tilde f_\#(\gamma) &= \Phi \cdot \tilde f_\#(\gamma)\cdot \Phi^{-1} \\
&=  (\delta_1,\dots,\delta_n;\epsilon) \cdot (\phi_1(\gamma),\dots,\phi_n(\gamma);\sigma_\gamma) \cdot (\delta_1,\dots,\delta_n;\epsilon)^{-1} \\
&=  (\delta_1,\dots,\delta_n;\epsilon) \cdot (\phi_1(\gamma),\dots,\phi_n(\gamma);\sigma_\gamma) \cdot (\delta_{\epsilon(1)}^{-1},\dots,\delta_{\epsilon(n)}^{-1}; \ep^{-1}) \\
&=  (\delta_1,\dots,\delta_n;\epsilon) \cdot (\phi_1(\gamma)\delta^{-1}_{\sigma^{-1}_\gamma(\ep(1))}, \dots, \phi_n(\gamma)\delta^{-1}_{\sigma^{-1}_\gamma(\ep(n))}; \sigma_\gamma\circ \ep^{-1}) \\
&= (\delta_1 \phi_{\ep^{-1}(1)}(\gamma)\delta^{-1}_{\ep^{-1}(\sigma^{-1}_\gamma(\ep(1)))}, \dots, 
\delta_n \phi_{\ep^{-1}(n)}(\gamma)\delta^{-1}_{\ep^{-1}(\sigma^{-1}_\gamma(\ep(n)))}; \ep\circ \sigma_\gamma\circ \ep^{-1}).
\end{align*}

Thus our assumption that  $[(\alpha,k)]_{\tau_\Phi \tilde f_\#} = [(\beta,j)]_{\tau_\Phi \tilde f_\#}$ means that there is some $\gamma$ with $\ep\circ \sigma_\gamma\circ \ep^{-1}(k) = j$, and 
\[ \alpha =  (\delta_j\phi_{\ep^{-1}(j)}(\gamma)\delta^{-1}_{\ep^{-1}(\sigma^{-1}_\gamma(\ep(j)))})^{-1}  \beta \gamma. \]

The fact that $\ep\circ \sigma_\gamma\circ \ep^{-1}(k) = j$ establishes the first part of \eqref{mueq}, and allows us to simplify the above to:
\begin{align*}
\alpha &= (\delta_j\phi_{\ep^{-1}(j)}(\gamma)\delta^{-1}_k)^{-1} \beta \gamma 
= \delta_k \phi_{\ep^{-1}(j)}(\gamma)^{-1}\delta^{-1}_j \beta \gamma 
\end{align*}
which establishes the second part of \eqref{mueq}.

Thus we have a well-defined function $\mu_\Phi: \Reid(\tau_\Phi \tilde f_\#) \to \Reid(\tilde f_\#)$. To show this is a bijection, we observe that for $A,B \in \pi^n\rtimes \Sigma_n$, a calculation will show that $\mu_A \circ \mu_B = \mu_{BA}$. Thus $\mu_{\Phi^{-1}}$ is an inverse for $\mu_\Phi$, and so $\mu_\Phi$ is a bijection. 
\end{proof}

Because of the theorem above we will linearize and consider $\mu_\Phi$ as a map $\mu_\Phi: \Z\Reid(\tau_\Phi \tilde f_\#) \to \Z\Reid(\tilde f_\#)$. The following theorem gives the relationship between $\RT(f,\tilde f)$ and $\RT(f,\Phi\tilde f)$.

\begin{thm}\label{muthm}
Let $f:X\to D_n(X)$ be an $n$-valued map with a lifting $\tilde f: \tilde X \to F_n(\tilde X,\pi)$, and let $\Phi = (\delta_1,\dots,\delta_n;\epsilon) \in \pi^n \rtimes \Sigma_n$. Then:
\[ \mu_\Phi(\RT(f,\Phi \tilde f)) = \RT(f,\tilde f). \]
\end{thm}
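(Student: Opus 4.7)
The plan is to show that $\mu_\Phi$ matches up Reidemeister classes of $\Phi\tilde f$ with Reidemeister classes of $\tilde f$ in a way that preserves the associated fixed point classes (and hence the indices that appear as coefficients in the Reidemeister trace). Since the index $\ind(f,U)$ depends only on $f$ and not on the choice of lift, once the class-level bookkeeping is done the theorem will follow by comparing the sums term-by-term.

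First I would compute the coordinates of the alternative lift. Writing $\Phi = (\delta_1,\dots,\delta_n;\ep)$ and using the action of $\pi^n \rtimes \Sigma_n$ on $F_n(\tilde X,\pi)$, we have
\[ (\Phi\tilde f)(\tilde x) = \bigl(\delta_1 \tilde f_{\ep^{-1}(1)}(\tilde x),\dots, \delta_n \tilde f_{\ep^{-1}(n)}(\tilde x)\bigr), \]
so $(\Phi\tilde f)_k = \delta_k \tilde f_{\ep^{-1}(k)}$. From this I can identify the fixed point class associated to a Reidemeister class $[(\alpha,k)]_{\tau_\Phi \tilde f_\#} \in \Reid(\tau_\Phi \tilde f_\#)$: it is
\[ p\Fix\bigl(\alpha^{-1}(\Phi\tilde f)_k\bigr) = p\Fix\bigl(\alpha^{-1}\delta_k \tilde f_{\ep^{-1}(k)}\bigr) = p\Fix\bigl((\delta_k^{-1}\alpha)^{-1}\tilde f_{\ep^{-1}(k)}\bigr), \]
which is exactly the fixed point class corresponding to $[(\delta_k^{-1}\alpha,\ep^{-1}(k))]_{\tilde f_\#} = \mu_\Phi([(\alpha,k)]_{\tau_\Phi \tilde f_\#})$. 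In other words, $\mu_\Phi$ is precisely the bijection on Reidemeister classes induced by matching up fixed point classes under the two labelings.

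Next, for any fixed point class $C$ of $f$, the sets $U_{[(\alpha,k)]_{\tau_\Phi \tilde f_\#}}$ and $U_{\mu_\Phi([(\alpha,k)]_{\tau_\Phi \tilde f_\#})}$ from Definition \ref{RTdef} can be taken to be the same neighborhood of $C$, and the fixed point index of $f$ there depends only on $f$ (not on the chosen lift). Applying $\mu_\Phi$ to the defining sum gives
\[ \mu_\Phi(\RT(f,\Phi\tilde f)) = \sum_{[(\alpha,k)] \in \Reid(\tau_\Phi \tilde f_\#)} \ind(f,U_{[(\alpha,k)]})\, \mu_\Phi([(\alpha,k)]_{\tau_\Phi \tilde f_\#}), \]
and reindexing this sum via the bijection $\mu_\Phi$ yields precisely $\RT(f,\tilde f)$.

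The only real obstacle is the bookkeeping in the first step: pinning down the formula $(\Phi\tilde f)_k = \delta_k \tilde f_{\ep^{-1}(k)}$ and then verifying that the resulting fixed point class matches the one that $\mu_\Phi$ assigns via the formula of Proposition \ref{muprop}. Once that identification is in place, the proof is essentially a relabeling, since Proposition \ref{muprop} already provides bijectivity and since the fixed point index is independent of the lift.
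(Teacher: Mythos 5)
Your proposal is correct and follows essentially the same route as the paper: compute $(\Phi\tilde f)_k = \delta_k \tilde f_{\ep^{-1}(k)}$ from the group action, observe that $p\Fix(\alpha^{-1}(\Phi\tilde f)_k) = p\Fix((\delta_k^{-1}\alpha)^{-1}\tilde f_{\ep^{-1}(k)})$ so that $\mu_\Phi$ matches the Reidemeister class labelings of the same fixed point classes, and then reindex the defining sum using the bijectivity from Proposition \ref{muprop}. No gaps.
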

\begin{proof}
Applying $\mu_\Phi$ to the formula of Theorem \ref{weckentracethm} we have:
\begin{align*}
\mu_\Phi(\RT(f,\Phi \tilde f)) &= \sum_{[(\alpha,k)] \in \Reid(\tau_\Phi \tilde f_\#)} \ind(f,p\Fix \alpha^{-1} (\Phi\tilde f)_{k}) \mu_\Phi([(\alpha,k)]_{\tau_\Phi \tilde f_\#}) \\
&= \sum_{[(\alpha,k)] \in \Reid(\tau_\Phi \tilde f_\#)} \ind(f,p\Fix \alpha^{-1} (\Phi\tilde f)_{k}) [(\delta_k^{-1}\alpha, \ep^{-1}(k))]_{\tilde f_\#}.
\end{align*}
where $(\Phi\tilde f)_k$ indicates the $k$th coordinate of $\Phi\tilde f: \tilde X \to F_n(X,\pi)$. (For convenience we are writing $\ind(f,p\Fix \alpha^{-1} (\Phi\tilde f)_{k})$ to mean $\ind(f,U)$ where $U$ is some small neighborhood of $p\Fix \alpha^{-1} (\Phi\tilde f)_{k}$ containing no other fixed points.)

We can compute this coordinate as follows:
\begin{align*}
\Phi\tilde f(\tilde x) &= (\delta_1,\dots,\delta_n;\epsilon) \cdot (\tilde f_1(\tilde x),\dots,\tilde f_n(\tilde x)) \\
&=(\delta_1 \tilde f_{\ep^{-1}(1)}(\tilde x), \dots, \delta_n \tilde f_{\ep^{-1}(n)}(\tilde x)),
\end{align*}
and so we have $(\Phi \tilde f)_k = \delta_k\tilde f_{\ep^{-1}(k)}$. Thus 
\[ \mu_\Phi(\RT(f,\Phi\tilde f)) = \sum_{[(\alpha,k)] \in \Reid(\tau_\Phi \tilde f_\#)} \ind(f,p\Fix \alpha^{-1} \delta_k \tilde f_{\ep^{-1}(k)}) [(\delta_k^{-1}\alpha, \ep^{-1}(k))]_{\tilde f_\#}. \]
Since $\mu_\Phi: \Reid(\tau_\Phi \tilde f_\#) \to \Reid(\tilde f_\#)$ is a bijection, we may reindex the sum, replacing $[(\delta_k^{-1}\alpha, \ep^{-1}(k))]_{\tilde f_\#}$ with $[(\beta,j)]_{\tilde f_\#}$. Thus the above becomes
\[ \mu_\Phi(\RT(f,\Phi\tilde f)) = \sum_{[(\beta,j)]\in \Reid(\tilde f_\#)} \ind(f,p\Fix \beta^{-1} \tilde f_j) [(\beta,j)]_{\tilde f_\#} = \RT(f,\tilde f). \qedhere
\]
\end{proof}

%
%\begin{thm}
%Cofibration property.\todo{fillin}
%\end{thm}
\section{A local Reidemeister trace, and an averaging formula}\label{localsec}

Like the fixed point index, the Reidemeister trace can be defined locally on some subset $U\subset X$ by considering only those fixed points appearing in $U$. An axiomatic approach to the local Reidemeister trace of a single-valued map was given in \cite{stae09b}. For a map $f:U\to D_n(X)$ where $U\subset X$ is open, we say the pair $(f,U)$ is \emph{admissible} when $\Fix(f)\cap U$ is compact. 

\begin{defn}\label{localRTdef}
Let $f:X\to D_n(X)$ be a map and $(f,U)$ be admissible such that $\Fix(f)\cap U$ is finite. Let $\tilde f: \tilde X \to F_n(\tilde X,\pi)$ be a lifting of $f$. Then:
\[ \RT(f,\tilde f,U) = \sum_{[(\alpha,k)]\in \Reid(\tilde f_\#)} \ind(f,U_{[(\alpha,k)]}\cap U) [(\alpha,k)]_{\tilde f} \in \Z\Reid(\tilde f_\#), \]
where $U_{[(\alpha,k)]}$ is a neighborhood of $p\Fix(\alpha^{-1}\tilde f_k)$ containing no other fixed points of $f$.
\end{defn}

Clearly from the definition we have $\RT(f,\tilde f,X) = \RT(f,\tilde f)$. This local Reidemeister trace also obeys familiar homotopy and additivity properties. 

If $H$ is a homotopy between two maps $f,g:U\to D_n(X)$ and $F(H) = \{(x,t) \in U\times [0,1] \mid x \in H(x,t) \}$ is compact, then we say $H$ is an \emph{admissible homotopy} of $(f,U)$ to $(g,U)$. As in Theorem \ref{htp}, we have:
\begin{thm}(Homotopy property)
For some open subset $U\subset X$, let $(f,U)$ and $(g,U)$ be admissibly homotopic by the admissible homotopy $H:X\times [0,1] \to D_n(X)$. 
Let $\tilde f$ be a lifting of $f$, and let $\tilde g$ be the lifting of $g$ obtained by lifting the homotopy, starting at $\tilde f$. Then, identifying $\Z\Reid(\tilde f_\#)$ with $\Z\Reid(g_\#)$, we have:
\[ \RT(f,\tilde f, U) = \RT(g,\tilde g,U). \]
\end{thm}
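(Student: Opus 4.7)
The plan is to mimic the proof of Theorem \ref{htp} in the local setting. The key point is that $F(H)$ being compact in $U \times [0,1]$ prevents fixed points from escaping $U$ during the homotopy, so class tracks within $U$ remain controllable.

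First, exactly as in Theorem \ref{htp}, I would use Theorem 2.7 of \cite{bdds20} to identify $\Z\Reid(\tilde f_\#)$ with $\Z\Reid(\tilde g_\#)$ via the lifted homotopy. It then suffices to fix a Reidemeister class $[(\alpha,k)]$ and show that its coefficient in $\RT(f,\tilde f,U)$ equals its coefficient in $\RT(g,\tilde g,U)$. For this, I would introduce the \emph{class track}
\[ F(H)_{[(\alpha,k)]} = \{(x,t) \in F(H) : x \in p\Fix(\alpha^{-1}(\tilde H_t)_k)\}, \]
where $\tilde H_t$ denotes the time-$t$ slice of the lifted homotopy. Schirmer's Lemma 6.2 of \cite{schi84b} implies that class is invariant along connected components of the homotopy fixed point set, so the class tracks are clopen in $F(H)$; in particular each is a compact subset of $U \times [0,1]$, the tracks are pairwise disjoint, and compactness of $F(H)$ forces all but finitely many to be empty.

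For each nonempty $F(H)_{[(\alpha,k)]}$ I would then produce an open neighborhood $V_{[(\alpha,k)]} \subset U$ for which the restricted homotopy on $V_{[(\alpha,k)]} \times [0,1]$ is admissible and has fixed point set exactly $F(H)_{[(\alpha,k)]}$. Granting this, the homotopy property of the $n$-valued fixed point index gives
\[ \ind(f,V_{[(\alpha,k)]}) = \ind(g,V_{[(\alpha,k)]}), \]
and by excision each side equals the coefficient of $[(\alpha,k)]$ in the corresponding local Reidemeister trace. Summing over classes yields $\RT(f,\tilde f,U) = \RT(g,\tilde g,U)$.

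The main obstacle is the construction of $V_{[(\alpha,k)]}$. The listed homotopy axiom is phrased for a cylindrical open set $V \times [0,1]$, while the $U$-projections of different class tracks may overlap (a single $x \in U$ can be a fixed point of $H_{t_1}$ in class $[(\alpha,k)]$ and of $H_{t_2}$ in a different class), so a naive separation inside $U$ need not exist. I see two ways around this: either derive the non-cylindrical version of the homotopy property from the standard additivity and excision axioms and apply it to disjoint open neighborhoods of the tracks in $U \times [0,1]$, or first invoke Schirmer's simplicial approximation (Theorem 4 of \cite{schi84a}) to reduce to a regular simplicial homotopy with finite fixed point set, in which case each point of $F(H)$ lies in a unique class and the tracks project to disjoint finite subsets of $U$, so cylindrical neighborhoods exist directly.
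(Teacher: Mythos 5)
The paper offers no separate argument here: it simply asserts that the local statement follows ``as in Theorem \ref{htp}'', i.e.\ from Schirmer's Lemmas 6.2 and 6.4, which are stated for globally defined maps and homotopies. Your proposal therefore supplies the localization that the paper leaves implicit, and its skeleton is the right one: identify $\Reid(\tilde f_\#)$ with $\Reid(\tilde g_\#)$ via Theorem 2.7 of \cite{bdds20}, partition $F(H)$ into class tracks, note that each track is relatively open in $F(H)$ (this is better justified by the unique-lifting argument from the paper's Note --- two lifts of an $n$-valued map into $F_n(\tilde X,\pi)$ that agree at a point are equal --- than by quoting Lemma 6.2, which concerns global maps), hence clopen, compact, pairwise disjoint, and finite in number by admissibility, and then compare coefficients classwise using the index. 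The obstacle you isolate is the genuine content of the localization, and of your two fixes only the first is viable. A ``generalized'' homotopy property of $\ind$ over a neighborhood in $U\times[0,1]$ of a single track does follow from the cylindrical homotopy axiom together with additivity and excision: cover $[0,1]$ by finitely many subintervals on each of which the track stays inside a fixed open $V\subset U$ whose cylinder meets $F(H)$ only in that track (possible by compactness of the track and of its complementary tracks), apply the cylindrical axiom on each subinterval, and match up at the subdivision times by excision; the time-$0$ and time-$1$ values are, again by excision, exactly the coefficients of $[(\alpha,k)]$ in $\RT(f,\tilde f,U)$ and $\RT(g,\tilde g,U)$.

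Your second fix does not work as stated. Schirmer's Theorem 4 of \cite{schi84a} makes a \emph{map} fix-finite, but the fixed point set of a homotopy is generically one-dimensional, so you cannot arrange $F(H)$ to be finite; and even if every slice $H_t$ were fix-finite, the projections to $U$ of distinct class tracks could still overlap, since the same point may be fixed at different times in different classes. So discard route two and carry out route one; with that step made explicit your argument is complete and is, in substance, the proof the paper intends.
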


We also easily obtain excision and additivity properties:
\begin{thm}(Excision property)
If $(f,U)$ is admissable and $(f,V)$ is also admissible with $V \subset U$ and $\Fix(f)\cap V = \Fix(f)\cap U$, then $\RT(f,\tilde f,U) = \RT(f,\tilde f, V)$.
\end{thm}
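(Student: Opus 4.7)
The plan is to reduce the equality $\RT(f,\tilde f,U) = \RT(f,\tilde f,V)$ to a term-by-term comparison, and then invoke the excision property of the fixed point index on each Reidemeister class separately. Since both sides of the equation are sums indexed by the same set $\Reid(\tilde f_\#)$, it suffices to prove that for every class $[(\alpha,k)]$ we have
\[ \ind(f, U_{[(\alpha,k)]}\cap U) = \ind(f, U_{[(\alpha,k)]}\cap V), \]
where $U_{[(\alpha,k)]}$ is a neighborhood of $p\Fix(\alpha^{-1}\tilde f_k)$ containing no other fixed points of $f$.

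To do this, first I would observe that $U_{[(\alpha,k)]}\cap V \subset U_{[(\alpha,k)]}\cap U$ because $V \subset U$. Next, the hypothesis $\Fix(f)\cap V = \Fix(f)\cap U$ ensures that every fixed point of $f$ contained in $U_{[(\alpha,k)]}\cap U$ is already contained in $U_{[(\alpha,k)]}\cap V$; indeed, any such fixed point lies in $\Fix(f)\cap U = \Fix(f)\cap V$, and also lies in $U_{[(\alpha,k)]}$, so it lies in $U_{[(\alpha,k)]}\cap V$. Consequently
\[ \Fix(f)\cap (U_{[(\alpha,k)]}\cap U) = \Fix(f)\cap (U_{[(\alpha,k)]}\cap V), \]
which is the precise setup for the excision property of the $n$-valued fixed point index listed in the introduction. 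Applying excision yields the desired equality of indices.

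Finally, summing the equal coefficients over all Reidemeister classes gives $\RT(f,\tilde f,U) = \RT(f,\tilde f,V)$. The main (and essentially only) obstacle is a bookkeeping check: verifying that both $(f,U_{[(\alpha,k)]}\cap U)$ and $(f,U_{[(\alpha,k)]}\cap V)$ are admissible pairs so that the fixed point index is defined for each. This follows because $\Fix(f)\cap U$ is compact (admissibility of $(f,U)$) and its intersection with the open set $U_{[(\alpha,k)]}$ is the fixed point class itself, which is a closed subset of a compact set and hence compact; the same argument applies for $V$ using the admissibility of $(f,V)$. With admissibility in hand, the proof is essentially a one-line consequence of the index's excision property.
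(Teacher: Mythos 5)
Your proposal is correct and follows essentially the same route as the paper: compare the two sums term by term and apply the excision property of the $n$-valued fixed point index to each $U_{[(\alpha,k)]}\cap U$ versus $U_{[(\alpha,k)]}\cap V$. In fact your write-up is slightly more careful than the paper's (you state the containment $U_{[(\alpha,k)]}\cap V \subset U_{[(\alpha,k)]}\cap U$ in the correct direction and check that the fixed points agree, which is exactly the hypothesis needed for index excision), so no changes are needed.
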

\begin{proof}
In Definition \ref{localRTdef}, we will have $U_{[(\alpha,k)]} \cap U \subset U_{[(\alpha,k)]} \cap V$, and by the excision property of the index we will have $\ind(f,U_{[(\alpha,k)]} \cap U) = \ind(U_{[(\alpha,k)]} \cap V)$. Thus $\RT(f,\tilde f,U) = \RT(f,\tilde f, V)$.
\end{proof}

\begin{thm}(Additivity property)
Let $(f,U)$ be admissible and $\tilde f$ be a lifting of $f$. Let $U_1, U_2 \subset U$ be disjoint open sets with $\Fix(f)\cap U \subset U_1 \cup U_2$. Then:
\[ \RT(f,\tilde f,U) = \RT(f,\tilde f,U_1) + \RT(f,\tilde f,U_2). \]
\end{thm}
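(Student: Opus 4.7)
The plan is to reduce the statement term-by-term over Reidemeister classes to the additivity property of the fixed point index that was listed in the introduction. The key observation is that Definition \ref{localRTdef} expresses $\RT(f,\tilde f, V)$ as a sum indexed by $\Reid(\tilde f_\#)$, and the indexing set does not depend on the open set $V$; only the coefficients $\ind(f, U_{[(\alpha,k)]} \cap V)$ do. So the theorem will follow once I show that each coefficient is additive in $V$ across the decomposition $U = U_1 \sqcup U_2$ (restricted to where fixed points live).

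First I would fix a Reidemeister class $[(\alpha,k)] \in \Reid(\tilde f_\#)$ together with an open neighborhood $U_{[(\alpha,k)]}$ of $p\Fix(\alpha^{-1}\tilde f_k)$ containing no other fixed points of $f$. Since $U_1$ and $U_2$ are disjoint and $\Fix(f) \cap U \subset U_1 \cup U_2$, the sets $U_{[(\alpha,k)]} \cap U_1$ and $U_{[(\alpha,k)]} \cap U_2$ are disjoint open subsets of $U_{[(\alpha,k)]} \cap U$ whose union contains $\Fix(f) \cap (U_{[(\alpha,k)]} \cap U)$. Applying the additivity property of the $n$-valued fixed point index recalled in the introduction gives
\[ \ind(f, U_{[(\alpha,k)]} \cap U) = \ind(f, U_{[(\alpha,k)]} \cap U_1) + \ind(f, U_{[(\alpha,k)]} \cap U_2). \]

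Summing this identity over all $[(\alpha,k)] \in \Reid(\tilde f_\#)$, multiplied by the Reidemeister class as a basis element of $\Z\Reid(\tilde f_\#)$, and invoking Definition \ref{localRTdef} on each side yields the desired equality $\RT(f,\tilde f,U) = \RT(f,\tilde f,U_1) + \RT(f,\tilde f,U_2)$. The only mild subtlety is to notice that the neighborhoods $U_{[(\alpha,k)]}$ chosen when computing $\RT(f,\tilde f, U_i)$ may in principle differ from those chosen for $\RT(f,\tilde f, U)$, but this is harmless: the excision property already assured in Definition \ref{localRTdef} that the coefficient does not depend on the choice of $U_{[(\alpha,k)]}$, so I may use a single common choice throughout. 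There is no real obstacle here; the proof is essentially a bookkeeping step translating the index's additivity into additivity at the level of Reidemeister classes.
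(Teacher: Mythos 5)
Your proof is correct and follows essentially the same route as the paper: both reduce the statement coefficient-by-coefficient to the additivity property of the $n$-valued fixed point index (the paper merely phrases the first step via its excision property for $\RT$, replacing $U$ by $U_1\cup U_2$, while you apply index additivity directly on $U_{[(\alpha,k)]}\cap U$). No gap here.
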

\begin{proof}
By the excision property we have $\RT(f,\tilde f, U) = \RT(f,\tilde f, U_1\cup U_2)$, and then:
\begin{align*}
\RT(f,\tilde f, U) &= %\RT(f,\tilde f, U_1\cup U_2) = 
\sum_{[(\alpha,k)]\in \Reid(\tilde f)} \ind(f,U_{[(\alpha,k)]}\cap (U_1\cup U_2)) [(\alpha,k)]_{\tilde f} \\
%&=  \sum_{[(\alpha,j)]\in \Reid(\tilde f)} \ind(f,U_{[(\alpha,j)]}\cap U_1) [(\alpha,j)]_{\tilde f} + \sum_{[(\alpha,j)]\in \Reid(\tilde f)} \ind(f,U_{[(\alpha,j)]}\cap U_2) [(\alpha,j)]_{\tilde f} \\
&= \RT(f,\tilde f,U_1) + \RT(f,\tilde f,U_2)
\end{align*}
by the additivity property of the index.
\end{proof}

The following property follows immediately from Definition \ref{localRTdef}, and specifies the value of the Reidemeister trace when $U$ contains only a single fixed point.
\begin{thm}(Isolated fixed point property)
Let $(f,U)$ be admissible such that $\Fix(f)\cap U =\{x\}$ is a single point. If $\tilde f = (\tilde f_1,\dots,\tilde f_n)$ is a lift with $x \in p\Fix(\alpha^{-1}\tilde f_k)$, then:
\[ \RT(f,\tilde f, U) = \ind(f,U)[(\alpha,k)]. \]
\end{thm}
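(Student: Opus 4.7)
The plan is to unpack Definition \ref{localRTdef} and observe that, under the hypothesis $\Fix(f)\cap U=\{x\}$, all but one of the summands vanishes. Recall that the definition gives
\[ \RT(f,\tilde f,U) = \sum_{[(\beta,j)]\in \Reid(\tilde f_\#)} \ind(f,U_{[(\beta,j)]}\cap U) [(\beta,j)]_{\tilde f_\#}, \]
where $U_{[(\beta,j)]}$ is chosen to contain the fixed point class $p\Fix(\beta^{-1}\tilde f_j)$ and no other fixed points of $f$.

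First I would handle the Reidemeister classes $[(\beta,j)]\neq[(\alpha,k)]$. Distinct Reidemeister classes correspond to disjoint fixed point classes, so $x\notin p\Fix(\beta^{-1}\tilde f_j)$; since $U_{[(\beta,j)]}$ contains no fixed points outside the class $[(\beta,j)]$, the intersection $U_{[(\beta,j)]}\cap U$ contains no fixed point of $f$ at all (the only fixed point in $U$ is $x$, which sits in a different class). By the solution property in the contrapositive form, or by taking $U_{[(\beta,j)]}\cap U$ as an excision set around the empty fixed point set, the additivity/excision properties of the $n$-valued fixed point index give $\ind(f,U_{[(\beta,j)]}\cap U)=0$. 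So all such summands contribute $0$.

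Second I would handle the class $[(\alpha,k)]$. The set $U_{[(\alpha,k)]}\cap U$ is an open neighborhood whose intersection with $\Fix(f)$ is exactly $\{x\}$, because $U_{[(\alpha,k)]}$ contains only fixed points of the class $[(\alpha,k)]$ and $U$ contains only $x$ itself. Applying the excision property of the index to the inclusion $U_{[(\alpha,k)]}\cap U\subseteq U$, which both isolate the same fixed point set $\{x\}$, yields
\[ \ind(f,U_{[(\alpha,k)]}\cap U)=\ind(f,U). \]
Combining the two observations, only the $[(\alpha,k)]$-term survives, giving $\RT(f,\tilde f,U)=\ind(f,U)[(\alpha,k)]$ as claimed.

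There is no real obstacle; the only point worth stating carefully is that distinct Reidemeister classes are represented by disjoint fixed point classes, so that no spurious contributions sneak in from other terms. This is built into the choice of the sets $U_{[(\beta,j)]}$, so the argument is essentially bookkeeping plus two applications of excision/additivity of the $n$-valued index.
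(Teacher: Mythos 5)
Your proposal is correct and follows essentially the same route as the paper: unpack Definition \ref{localRTdef}, observe that only the $[(\alpha,k)]$-term survives (the other summands having index $0$ since their sets contain no fixed points, distinct Reidemeister classes having disjoint fixed point classes), and apply excision to get $\ind(f,U_{[(\alpha,k)]}\cap U)=\ind(f,U)$. The paper's proof is just a terser version of this same bookkeeping, so there is nothing to add.
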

\begin{proof}
Since $f$ has only one fixed point in $U$, the sum of Definition \ref{localRTdef} consists of a single term:
\[ \RT(f,\tilde f,U) = \ind(f,U_x)[(\alpha, k)] \]
where $U_x\subset U$ is a small neighborhood containing $x$. By the excision property, the above equals $\ind(f,U)[(\alpha,k)]$ as desired.
\end{proof}

We can easily obtain a uniqueness result in the spirit of the main theorem of \cite{stae09b}. For a space $X$, let $\adm(X)$ be the set of all triples $(f,\tilde f, U)$ where $f:X\to D_n(X)$ is an $n$-valued map and $\tilde f: \tilde X \to F_n(\tilde X,\pi)$ is a lifting and $U\subset X$ such that $(f,U)$ is admissible. 

\begin{thm}\label{RTuniqueness}
There is at most one function $\tau$ with domain $\adm(X)$ such that $\tau(f,\tilde f,U) \in \Z\Reid(\tilde f_\#)$, which satisfies the following properties:
\begin{itemize}
\item (Additivity) If $(f,\tilde f,U)\in \adm(X)$ and $U_1$ and $U_2$ are disjoint open subsets of $U$ with $\Fix(f)\cap U \subset U_1\cup U_2$, then:
\[ \tau(f,\tilde f,U) = \tau(f,\tilde f,U_1) + \tau(f,\tilde f,U_2). \]
\item (Homotopy) If $(f,\tilde f,U),(g,\tilde g,U)\in \adm(X)$ such that $(f,U)$ is admissibly homotopic to $(g,U)$ by a homotopy which lifts to a homotopy of $\tilde f$ to $\tilde g$, then 
\[ \tau(f,\tilde f,U) = \tau(g,\tilde g,U). \]
\item (Isolated fixed point) If $(f,\tilde f,U)\in \adm(X)$ and $f$ has a single fixed point $x$ on $U$ with $x \in p\Fix(\alpha^{-1}\tilde f_k(x))$, then:
\[ \tau(f,\tilde f, U) = \ind(f,U)[(\alpha,k)]. \]
\end{itemize}
\end{thm}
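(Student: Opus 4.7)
The plan is the usual ``reduction to isolated fixed points'' uniqueness argument, adapted to the present setting. Suppose two functions $\tau_1$ and $\tau_2$ both satisfy the three listed axioms; I want to show $\tau_1(f,\tilde f,U)=\tau_2(f,\tilde f,U)$ for every $(f,\tilde f,U)\in \adm(X)$.

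First I extract two consequences from additivity alone. Applying additivity with $U_1=U_2=\emptyset$ forces $\tau(f,\tilde f,\emptyset)=2\tau(f,\tilde f,\emptyset)$, hence $\tau(f,\tilde f,\emptyset)=0$; then applying additivity with $U_1=U$, $U_2=\emptyset$ shows $\tau(f,\tilde f,U)=0$ whenever $\Fix(f)\cap U=\emptyset$. The same trick with $U_1=V$, $U_2=\emptyset$ recovers an excision property: if $V\subset U$ and $\Fix(f)\cap V=\Fix(f)\cap U$, then $\tau(f,\tilde f,U)=\tau(f,\tilde f,V)$. Next, suppose $\Fix(f)\cap U=\{x_1,\dots,x_m\}$ is finite; choose pairwise disjoint open neighborhoods $V_i\subset U$ of the $x_i$. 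Iterated additivity gives $\tau(f,\tilde f,U)=\sum_{i=1}^m \tau(f,\tilde f,V_i)$, and the isolated-fixed-point axiom evaluates each summand as $\ind(f,V_i)[(\alpha_i,k_i)]$, where $x_i\in p\Fix(\alpha_i^{-1}\tilde f_{k_i})$. Thus $\tau_1$ and $\tau_2$ necessarily agree whenever $\Fix(f)\cap U$ is finite.

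For the general case, use compactness of $\Fix(f)\cap U$ and normality of $X$ to pick an open $V$ with $\Fix(f)\cap U\subset V\subset \overline V\subset U$; by the excision just derived, $\tau(f,\tilde f,U)=\tau(f,\tilde f,V)$, so it suffices to determine $\tau(f,\tilde f,V)$. Schirmer's $n$-valued simplicial approximation theorem (Theorem 4 of \cite{schi84a}) produces a regular simplicial map $g$ arbitrarily close to $f$, together with a small homotopy; since fixed points of a regular simplicial map lie only in interiors of maximal simplices, $\Fix(g)\cap V$ is finite. Letting $\tilde g$ be the lifting obtained by lifting this homotopy starting at $\tilde f$, the homotopy axiom on $V$ gives $\tau(f,\tilde f,V)=\tau(g,\tilde g,V)$, and the previous paragraph evaluates this uniquely. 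The main obstacle is verifying that the approximating homotopy is admissible on $V$, i.e., that fixed points of the homotopy do not reach $\partial V$; this is arranged by choosing $V$ so that $\Fix(f)\cap\overline V$ lies a positive distance from $\partial V$ and then taking $g$ close enough to $f$ that upper semicontinuity of the fixed point set keeps all fixed points of the homotopy inside $V\times[0,1]$. This technical step is the analog of the corresponding argument in the single-valued uniqueness theory of \cite{stae09b}.
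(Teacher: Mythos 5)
Your proposal is correct and follows essentially the same route as the paper: approximate $f$ by a regular $n$-valued simplicial map $g$ via Schirmer's theorem, use the homotopy axiom to pass to $(g,\tilde g)$, then use additivity (with excision as a consequence) to localize to disjoint neighborhoods of the finitely many fixed points and evaluate each by the isolated fixed point axiom. Your added care---deriving excision and the vanishing on fixed-point-free sets from additivity, and checking admissibility of the approximating homotopy---fills in details the paper's proof passes over silently, but it is the same argument.
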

\begin{proof}
Let $\tau$ satisfy the three properties above. We must show that these three properties suffice to completely determine the value of $\tau(f,\tilde f,U)$.

By Theorem 4 of \cite{schi84a} there is a regular $n$-valued map $g$ where $(f,U)$ is admissibly homotopic to $(g,U)$. Letting $\tilde g$ be the lifting of $g$ obtained by lifting the admissible homotopy from $f$ to $g$, we will have $\tau(f,\tilde f, U) = \tau(g,\tilde g,U)$. 

Since $g$ is regular there is an open subset $V = V_1\sqcup \dots \sqcup V_m \subset U$ consisting of a disjoint union of small neighborhoods around each fixed point of $f$, by additivity and excision we have:
\[ \tau(f,\tilde f, V) = \sum_{i=1}^m \tau(g,\tilde g, V_i). \]
Let $x_i\in V_i$ be the fixed point of $g$ in $V_i$, and assume $x_i \in p\Fix(\alpha_i^{-1}\tilde g_{k_i})$. Then by the isolated fixed point property we have 
\[ \tau(f,\tilde f,V) = \sum_{i=1}^m \ind(g,V_i)[(\alpha_i^{-1},k_i)]. \]

Therefore the value of $\tau$ can be computed fully using only the three properties. Thus there can be at most one function satisfying the three properties.
\end{proof}

Since we already know the Reidemeister trace satisfies these three properties, we obtain:
\begin{cor}
Any function satisfying the three properties of Theorem \ref{RTuniqueness} is the local Reidemeister trace.
\end{cor}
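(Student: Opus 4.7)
My plan is to observe that this corollary is an immediate consequence of Theorem \ref{RTuniqueness} together with the three properties of the local Reidemeister trace that have already been established earlier in this section. The strategy is simply to verify that $\RT(\cdot,\cdot,\cdot)$ is one function satisfying the hypotheses of Theorem \ref{RTuniqueness}, and then invoke uniqueness.

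First I would recall that the local Reidemeister trace $\RT(f,\tilde f, U)$ of Definition \ref{localRTdef} has been shown to satisfy the additivity property (the Additivity Property theorem proved just after Definition \ref{localRTdef}), the homotopy property (the Homotopy Property theorem in this section, which follows from Theorem \ref{htp} applied locally), and the isolated fixed point property (the Isolated fixed point property theorem proved above). Thus $\tau = \RT$ is \emph{a} function on $\adm(X)$ meeting all three hypotheses of Theorem \ref{RTuniqueness}.

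Now suppose $\tau'$ is any other function on $\adm(X)$ satisfying the three listed properties. By Theorem \ref{RTuniqueness}, there is at most one such function, so we must have $\tau' = \RT$ pointwise on $\adm(X)$. This is exactly the claim of the corollary.

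There is no substantive obstacle here; the only thing to check is that each of the three properties in Theorem \ref{RTuniqueness} is formulated in a way that matches the properties already proved for $\RT$ (for instance, that the homotopy hypothesis in Theorem \ref{RTuniqueness} — admissible homotopy which lifts to a homotopy of $\tilde f$ to $\tilde g$ — is the same hypothesis used in our Homotopy Property theorem, which it is, by construction of $\tilde g$ via lifting the homotopy starting at $\tilde f$). Once this bookkeeping is noted, the proof is complete in one line.
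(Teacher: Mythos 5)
Your argument is correct and is exactly the paper's (implicit) proof: the local Reidemeister trace has already been shown in this section to satisfy the additivity, homotopy, and isolated fixed point properties, so Theorem \ref{RTuniqueness} forces any function with these properties to coincide with it. Nothing further is needed.
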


Now we can state and prove our averaging formula for the Reidemeister trace. In the single-valued theory, an averaging formula for the Lefshetz number appeared in the 1980s in \cite{jian83}. Various limited averaging results for the Nielsen number appeared in \cite{kl07}, which make some specific assumptions on the maps used. Nevertheless a full averaging formula (without making these assumptions) was demonstrated for the Reidmeister trace in \cite{lz15}. This averaging formula was extended to Nielsen coincidence theory in \cite{ls17}. Following an approach from \cite{ls17}, we prove our averaging formula using the axiomatic approach above.

Let $q:\bar X \to X$ be a finite covering. This covering corresponds to a normal subgroup $G < \pi$ with finite quotient $\pi/G$. Let $f:X\to D_n(X)$ have some lifting $\bar f:\bar X \to D_n(\bar X)$. The induced homomorphisms for $f$ and $\bar f$ take the form $\tilde f_\#:\pi \to \pi^n \rtimes \Sigma_n$ and $\bar \tilde f_\#: G \to G^n \rtimes \Sigma_n$. The inclusion map $G\to \pi$ induces a function of Reidemeister sets 
$\iota: \Reid(\bar f_\#) \to \Reid(\tilde f_\#)$. When working with other homomorphisms $\tau_\Phi \tilde f_\#$ and $\tau_\Phi \bar f_\#$, we denote the inclusion by:
\[ \iota_\Phi: \Reid(\tau_\Phi \bar f_\#) \to \Reid(\tau_\Phi \tilde f_\#). \]

Given $\beta \in \pi$, let $\beta^n \in \pi^n \rtimes \Sigma_n$ be given by $\beta^n = (\beta, \dots, \beta; 1)$. Similarly for $\bar \beta \in G$, we write $\bar \beta^n = (\bar \beta, \dots, \bar \beta; 1) \in G^n \rtimes \Sigma_n$. 

\begin{thm}
Let $q:\bar X \to X$ be a finite covering with covering group $G$. Let $f:X\to D_n(X)$ be an $n$-valued map and $\tilde f:\tilde X \to F_n(\tilde X,\pi)$ be a lifting, with $\bar f:\bar X \to D_n(\bar X)$  the map induced by $\tilde f$. Then:
\[ \RT(f,\tilde f,U) = \frac1{[\pi:G]} \sum_{\bar \beta \in \pi/G} \mu_{\beta^n} \circ \iota_{\beta^n} (\RT(\bar \beta^n \bar f, \beta^n \tilde f, \bar U)), \]
where $\beta\in \pi$ is some element projecting to $\bar \beta \in \pi/G$, and $\bar U = q^{-1}(U)$. Above, the function $\bar \beta^n \bar f: \bar X \to D_n(\bar X)$ is the $n$-valued map obtained by translating each value of the $n$-valued map $\bar f$ by the covering transformation $\bar \beta: \bar X \to \bar X$. 
\end{thm}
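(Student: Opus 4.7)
The plan is to apply the uniqueness theorem (Theorem \ref{RTuniqueness}) to the right-hand side of the formula. Define
\[ \tau(f,\tilde f, U) = \frac1{[\pi:G]} \sum_{\bar \beta \in \pi/G} \mu_{\beta^n} \circ \iota_{\beta^n} (\RT(\bar \beta^n \bar f, \beta^n \tilde f, \bar U)), \]
and verify that $\tau$ satisfies the additivity, homotopy, and isolated fixed point axioms; uniqueness then forces $\tau = \RT$.

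Additivity and homotopy are routine. Since $\bar U = q^{-1}(U)$ respects disjoint decompositions, the additivity of the local Reidemeister trace on $\bar X$ applied to each summand $\bar\beta^n\bar f$ transfers term-by-term to $\tau$. For homotopy, an admissible homotopy of $(f,U)$ lifts to an admissible homotopy of $(\bar f, \bar U)$, and composition with each deck transformation $\bar\beta$ yields an admissible homotopy of $(\bar\beta^n\bar f, \bar U)$; the lifted homotopy on $\tilde X$ carries $\beta^n\tilde f$ to $\beta^n\tilde g$, so the homotopy property of the local Reidemeister trace on $\bar X$ again transfers to $\tau$.

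The heart of the proof is the isolated fixed point property. Suppose $f$ has a unique fixed point $x\in U$, and pick a lift $\tilde x \in p^{-1}(x)$ with $\tilde f_k(\tilde x) = \alpha\tilde x$. Letting $\bar p:\tilde X\to \bar X$ be the intermediate cover and $\bar x = \bar p(\tilde x)$, the preimages $q^{-1}(x)$ are exactly $\{\bar\gamma\bar x : \bar\gamma\in \pi/G\}$. First I would show that $\bar\gamma\bar x$ is a fixed point of $\bar\beta^n\bar f$ if and only if $g := \phi_{\sigma_\gamma(k)}(\gamma)\alpha\gamma^{-1}\beta$ lies in $G$. Writing the condition $\bar\beta^{-1}\bar\gamma\bar x \in \bar f(\bar\gamma\bar x)$ upstairs as $\tilde f_j(\gamma\tilde x) = g\beta^{-1}\gamma\tilde x$ for some $j$ and some $g \in G$, and applying Proposition 2's identity $\tilde f_j(\gamma\tilde x) = \phi_j(\gamma)\tilde f_{\sigma_\gamma^{-1}(j)}(\tilde x)$, this forces $j = \sigma_\gamma(k)$ and pins down $g$. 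In particular, for each $\bar\gamma \in \pi/G$ there is exactly one $\bar\beta\in \pi/G$ making $\bar\gamma\bar x$ a fixed point of $\bar\beta^n\bar f$, so the $[\pi:G]$ preimages of $x$ distribute precisely one per $\bar\beta$.

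Next I would identify the Reidemeister class and index of each such fixed point. A parallel computation with the lift $\beta^n\tilde f$ shows that the class of $\bar\gamma\bar x$ under $\bar\beta^n\bar f$ is $[(\beta g\beta^{-1}, \sigma_\gamma(k))]_{\tau_{\beta^n}\bar f_\#}$. Applying $\iota_{\beta^n}$ and then $\mu_{\beta^n}$ from Proposition \ref{muprop} (with $\Phi = (\beta,\dots,\beta;1)$, so $\delta_k^{-1} = \beta^{-1}$ and $\epsilon = \id$) yields $[(\phi_{\sigma_\gamma(k)}(\gamma)\alpha\gamma^{-1}, \sigma_\gamma(k))]_{\tilde f_\#}$, which equals $[(\alpha,k)]_{\tilde f_\#}$ by \eqref{tceq} applied with $\gamma$ itself. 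The index at $\bar\gamma\bar x$ equals $\ind(f,U)$ since $q$ is a local homeomorphism identifying the fixed-point branch of $\bar\beta^n\bar f$ near $\bar\gamma\bar x$ with the fixed-point branch of $f$ near $x$. Summing $[\pi:G]$ identical contributions $\ind(f,U)[(\alpha,k)]$ and dividing by $[\pi:G]$ yields $\ind(f,U)[(\alpha,k)]$, matching the isolated fixed point axiom. The main obstacle is the semidirect-product bookkeeping in this step: tracking the permutation $\sigma_\gamma$, the twisted $\phi_j$, and the conjugation by $\beta$ so that $\mu_{\beta^n}\circ\iota_{\beta^n}$ collapses everything onto the single class $[(\alpha,k)]_{\tilde f_\#}$.
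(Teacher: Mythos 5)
Your overall strategy coincides with the paper's: set $\tau(f,\tilde f,U)$ equal to the right-hand side and force $\tau=\RT$ via Theorem \ref{RTuniqueness}, with the additivity and homotopy axioms checked exactly as the paper does. The one genuine difference is in the isolated fixed point axiom. The paper notes that every contribution collapses, after $\mu_{\beta^n}\circ\iota_{\beta^n}$, onto the single class $[(\alpha,k)]_{\tilde f_\#}$, and then handles the coefficients by citing the index averaging formula of \cite{stae18}, namely $\ind(f,U)=\frac1{[\pi:G]}\sum_{\bar\beta}\ind(\bar\beta^n\bar f,\bar U)$. You instead reprove that index relation from scratch: you locate the fixed points of the translates $\bar\beta^n\bar f$ over $x$ and transport the local index through the local homeomorphism $q$. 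Your semidirect-product bookkeeping is correct --- the class of $\bar\gamma\bar x$ with respect to the lift $\beta^n\tilde f$ is $[(\beta\phi_{\sigma_\gamma(k)}(\gamma)\alpha\gamma^{-1},\sigma_\gamma(k))]$, and $\mu_{\beta^n}\circ\iota_{\beta^n}$ does send it to $[(\alpha,k)]_{\tilde f_\#}$ by \eqref{tceq} applied with the same $\gamma$ --- so your route is more self-contained, at the cost of redoing (a special case of) what \cite{stae18} already provides.

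Two points need repair. First, your assertion that the $[\pi:G]$ preimages of $x$ ``distribute precisely one per $\bar\beta$'' does not follow and is false in general: for each $\bar\gamma$ there is indeed a unique $\bar\beta$, namely $\bar\beta=\overline{\gamma\alpha^{-1}\phi_{\sigma_\gamma(k)}(\gamma)^{-1}}$, but this assignment $\pi/G\to\pi/G$ need not be injective. For instance, for a single-valued map with $\phi=\mathrm{id}$ and $\alpha=1$ at the fixed point, every $\bar\gamma$ yields $\bar\beta=\bar 1$, so all preimages of $x$ are fixed points of $\bar f$ itself and the other translates $\bar\beta^n\bar f$ have none over $x$. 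Your conclusion nonetheless survives with the corrected statement: summing over all $\bar\beta$ there are exactly $[\pi:G]$ incidences (one per $\bar\gamma$), each contributing $\ind(f,U)[(\alpha,k)]_{\tilde f_\#}$, so the total is still $[\pi:G]\,\ind(f,U)[(\alpha,k)]_{\tilde f_\#}$. Second, the right-hand side involves a choice of representative $\beta$ for each $\bar\beta$; before Theorem \ref{RTuniqueness} can be applied you must either prove $\tau$ is independent of these choices --- the paper does this using Theorem \ref{muthm}, via the identity $\mu_{(\gamma\beta)^n}\circ\iota_{(\gamma\beta)^n}=\mu_{\beta^n}\circ\iota_{\beta^n}\circ\mu_{\gamma^n}$ for $\gamma\in G$ --- or fix representatives once and for all and observe at the end that the resulting identity is choice-free. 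Your proposal is silent on this and should address it.
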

\begin{proof}
Let 
\[ \tau(f,\tilde f, U) = \frac1{[\pi:G]} \sum_{\bar \beta \in \pi/G} \mu_{\beta^n} \circ \iota_{\beta^n} (\RT(\bar \beta^n \bar f, \beta^n \tilde f, \bar U)).  \]

First we will show that $\tau$ is well-defined, that is, independent of the element $\beta \in \pi$ which projects to $\bar \beta \in \pi/G$. Let $\gamma \beta,\beta\in \pi$ be two alternative choices of elements which both project to some $\bar \beta \in \pi/G$ (so $\gamma \in G$), and we will show that:
\begin{equation}\label{welldefeq}
\mu_{\beta^n}(\iota_{\beta^n}(\RT(\bar \beta^n\bar f, \beta^n\tilde f,\bar U))) = \mu_{(\gamma\beta)^n}(\iota_{(\gamma\beta)^n}(\RT(\bar \beta^n\bar f,(\gamma \beta)^n\tilde f,\bar U)))
\end{equation}

Let $[(\alpha,k)]\in \Reid(\tau_{(\gamma\beta)^n} \bar f_\#)$, and we have:
\begin{align*}
\mu_{(\gamma\beta)^n}(\iota_{(\gamma\beta)^n}([(\alpha,k)]_{\tau_{(\gamma\beta)^n}\bar  f_\#})) &= \mu_{(\gamma\beta)^n}([(\alpha,k)]_{\tau_{(\gamma\beta)^n} \tilde f_\#}) \\
&= [(\beta^{-1}\gamma^{-1}\alpha,k)]_{\tilde f_\#} = \mu_{\beta^n}([(\gamma^{-1}\alpha,k)]_{\tau_{\beta^n}\tilde f_\#}) \\
&= \mu_{\beta^n}(\iota_{\beta^n}([(\gamma^{-1}\alpha, k)]_{\tau_{\beta^n}\bar  f_\#})) \\
&= \mu_{\beta^n}(\iota_{\beta^n}(\mu_{\gamma^n}([(\alpha,k)]_{\tau_{(\gamma\beta)^n}\bar  f_\#})),
\end{align*}
and thus we have $\mu_{(\gamma\beta)^n}\circ \iota_{(\gamma\beta)^n} = \mu_{\beta^n} \circ \iota_{\beta^n} \circ \mu_{\gamma^n}$.

Now, starting with the right side of \eqref{welldefeq}, we have:
\begin{align*}
\mu_{(\gamma\beta)^n}(\iota_{(\gamma\beta)^n}(\RT(\bar \beta^n\bar f,(\gamma \beta)^n\tilde f,\bar U)) &= \mu_{\beta^n}(\iota_{\beta^n}(\mu_{\gamma^n}(\RT(\bar \beta^n\bar f,(\gamma \beta)^n\tilde f,\bar U)) \\
&= \mu_{\beta^n}( \iota_{\beta^n}(\RT(\bar \beta^n\bar f,\beta^n\tilde f,\bar U)))
\end{align*}
by Theorem \ref{muthm}, establishing \eqref{welldefeq}.

So $\tau$ is a well-defined function which takes a triple $(f,\tilde f, U)$ and gives an element of $\Reid(\tilde f_\#)$. To show that $\tau$ is the Reidemeister trace, it will suffice to show that $\tau$ satisfies the three properties of Theorem \ref{RTuniqueness}. 

For the additivity property, let $\Fix(f)\cap U \subset U_1\sqcup U_2$. Then, letting $\bar U_i = q^{-1}(U_i)$, we have $\Fix(\bar \beta f) \cap \bar U \subset \bar U_1 \sqcup \bar U_2$ for each $\bar \beta$, and so by the additivity property of $\RT$ we have:
\begin{align*} 
\tau(f,\tilde f,U) &= \frac{1}{[\pi:G]} \sum_{\bar \beta \in \pi/g} \mu_{\beta^n}\circ \iota_{\beta^n}(\RT(\bar \beta^n \bar f, \beta^n \tilde f, \bar U))\\
&= \frac{1}{[\pi:G]} \sum_{\bar \beta \in \pi/g} \mu_{\beta^n}\circ \iota_{\beta^n}(\RT(\bar \beta^n \bar f, \beta^n \tilde f, \bar U_1) + \RT(\bar \beta^n \bar f, \beta^n \tilde f, \bar U_2)) \\ 
&= \tau(f,\tilde f, U_1) + \tau(f,\tilde f,U_2) 
\end{align*}
as desired.

For the homotopy property, note that any admissible homotopy of $(f,U)$ to some $(g,U)$ will naturally lift to an admissible homotopy of $(\bar \beta \bar f,\bar U)$ to $(\bar \beta \bar g,\bar U)$, and thus $\tau(f,\tilde f, U) = \tau(g,\tilde g,U)$ by the homotopy property of $\RT$. 

For the isolated fixed point property, let $\Fix(f)\cap U=\{x\}$ with $x\in p\Fix(\alpha^{-1}\tilde f_k)$. Then there is some $\tilde x \in p^{-1}(x)$ with $\tilde f_k(\tilde x) = \alpha \tilde x$. Then for each $\bar \beta$, we have $\Fix(\bar \beta^n \bar f\cap \bar U) = \{\bar x\}$ with $\beta\tilde f_k(\tilde x) = \beta \alpha\tilde x$. 
Then the isolated fixed point property for the Reidemeister trace gives:
\[ \RT(\bar \beta^n \bar f, \beta^n\tilde f,\bar U) = \ind(\bar \beta^n\bar f, \bar U)[( \beta\alpha, k)]_{\tau_{\bar \beta} \bar f} \]
Thus we obtain:
\begin{align*}
\tau(f,\tilde f, U) &= \frac1{[\pi:G]} \sum_{\bar \beta \in \pi/G} \mu_{\beta^n} \circ \iota_{\beta^n} (\ind(\bar\beta^n \bar f, \bar U)[(\beta\alpha, k)]_{\tau_{\beta^n} \tilde f_\#}) \\
&= \frac1{[\pi:G]} \sum_{\bar \beta \in \pi/G} \ind(\bar\beta^n \bar f, \bar U) \mu_{\beta^n} \circ \iota_{\beta^n} ([(\beta\alpha, k)]_{\tau_{\beta^n} \tilde f_\#}) \\
&= \left( \frac1{[\pi:G]} \sum_{\bar \beta \in \pi/G} \ind(\bar\beta^n \bar f, \bar U) \right) [(\alpha, k)]_{\tilde f_\#} \\
&= \ind(f,U)[(\alpha,k)]_{\tilde f_\#},
\end{align*}
where the last equality is by the index averaging result in \cite{stae18}.
So $\tau$ satisfies the isolated fixed point property.
\end{proof}

By taking $U=X$, we immediately obtain an averaging formula for the (non-local) Reidemeister trace:
\begin{cor}
With notation as in the theorem above, we have:
\[ \RT(f,\tilde f) = \frac1{[\pi:G]} \sum_{\bar \beta \in \pi/G} \mu_{\beta^n} \circ \iota_{\beta^n} (\RT(\bar \beta^n \bar f, \beta^n \tilde f)). \]
\end{cor}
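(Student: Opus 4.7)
The plan is very short because this corollary is a direct specialization of the preceding theorem. I would simply take $U = X$ in the averaging formula for the local Reidemeister trace, noting that $(f, X)$ is automatically admissible since $X$ is compact and $\Fix(f) \subset X$ is closed, hence compact.

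With $U = X$, the preimage $\bar U = q^{-1}(X) = \bar X$, and the admissibility of $(\bar \beta^n \bar f, \bar X)$ follows from the compactness of $\bar X$ (which is a finite cover of the compact space $X$). By the definitional identity $\RT(f,\tilde f, X) = \RT(f,\tilde f)$ noted immediately after Definition \ref{localRTdef}, and the analogous identity $\RT(\bar \beta^n \bar f, \beta^n \tilde f, \bar X) = \RT(\bar \beta^n \bar f, \beta^n \tilde f)$ on the covering space, the formula of the averaging theorem becomes exactly the desired equation.

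There is no real obstacle here, since all the work was already done in the proof of the local averaging theorem (well-definedness of the right side, independence of the choice of lift $\beta \in \pi$ of $\bar \beta \in \pi/G$, and the verification of the three axioms characterizing $\RT$). The corollary is essentially a restatement in the global setting, and the one-line proof would simply invoke the theorem with $U = X$.
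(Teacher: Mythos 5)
Your proposal is correct and matches the paper's argument exactly: the corollary is obtained by taking $U = X$ in the preceding averaging theorem, using $\RT(f,\tilde f, X) = \RT(f,\tilde f)$ (and the analogous identity on the cover), with admissibility automatic by compactness. Nothing further is needed.
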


\bibliographystyle{hplain}
%\bibliography{nielsen}

\end{document}